\begin{document}

\newtheorem{thm}{Theorem}
\newtheorem{lem}[thm]{Lemma}
\newtheorem{claim}[thm]{Claim}
\newtheorem{cor}[thm]{Corollary}
\newtheorem{prop}[thm]{Proposition} 
\newtheorem{definition}[thm]{Definition}
\newtheorem{question}[thm]{Open Question}
\newtheorem{conj}[thm]{Conjecture}
\newtheorem{prob}{Problem}

\theoremstyle{remark}
\newtheorem{rem}[thm]{Remark}

\newcommand{\GL}{\operatorname{GL}}
\newcommand{\SL}{\operatorname{SL}}
\newcommand{\lcm}{\operatorname{lcm}}
\newcommand{\ord}{\operatorname{ord}}
\newcommand{\Op}{\operatorname{Op}}
\newcommand{\Tr}{\operatorname{Tr}}
\newcommand{\Nm}{\operatorname{Nm}}

\numberwithin{equation}{section}
\numberwithin{thm}{section}
\numberwithin{table}{section}

\def\vol {{\mathrm{vol\,}}}
\def\squareforqed{\hbox{\rlap{$\sqcap$}$\sqcup$}}
\def\qed{\ifmmode\squareforqed\else{\unskip\nobreak\hfil
\penalty50\hskip1em\null\nobreak\hfil\squareforqed
\parfillskip=0pt\finalhyphendemerits=0\endgraf}\fi}

\def \balpha{\bm{\alpha}}
\def \bbeta{\bm{\beta}}
\def \bgamma{\bm{\gamma}}
\def \blambda{\bm{\lambda}}
\def \bchi{\bm{\chi}}
\def \bphi{\bm{\varphi}}
\def \bpsi{\bm{\psi}}
\def \bomega{\bm{\omega}}
\def \btheta{\bm{\vartheta}}

\def\eps{\varepsilon}

\newcommand{\bfxi}{{\boldsymbol{\xi}}}
\newcommand{\bfrho}{{\boldsymbol{\rho}}}

\def\Kab{\sfK_\psi(a,b)}
\def\Kuv{\sfK_\psi(u,v)}
\def\SaUV{\cS_\psi(\balpha;\cU,\cV)}
\def\SaAV{\cS_\psi(\balpha;\cA,\cV)}

\def\SUV{\cS_\psi(\cU,\cV)}
\def\SAB{\cS_\psi(\cA,\cB)}

\def\Kmnp{\sfK_p(m,n)}

\def\KKap{\cH_p(a)}
\def\KKaq{\cH_q(a)}
\def\KKmnp{\cH_p(m,n)}
\def\KKmnq{\cH_q(m,n)}

\def\Klmnp{\sfK_p(\ell, m,n)}
\def\Klmnq{\sfK_q(\ell, m,n)}

\def \SALMNq {\cS_q(\balpha;\cL,\cI,\cJ)}
\def \SALMNp {\cS_p(\balpha;\cL,\cI,\cJ)}

\def \SACXMQX {\fS(\balpha,\bzeta, \bxi; M,Q,X)}

\def\SAMJp{\cS_p(\balpha;\cM,\cJ)}
\def\SAMJq{\cS_q(\balpha;\cM,\cJ)}
\def\SAqMJq{\cS_q(\balpha_q;\cM,\cJ)}
\def\SAJq{\cS_q(\balpha;\cJ)}
\def\SAqJq{\cS_q(\balpha_q;\cJ)}
\def\SAIJp{\cS_p(\balpha;\cI,\cJ)}
\def\SAIJq{\cS_q(\balpha;\cI,\cJ)}

\def\RIJp{\cR_p(\cI,\cJ)}
\def\RIJq{\cR_q(\cI,\cJ)}

\def\TWXJp{\cT_p(\bomega;\cX,\cJ)}
\def\TWXJq{\cT_q(\bomega;\cX,\cJ)}
\def\TWpXJp{\cT_p(\bomega_p;\cX,\cJ)}
\def\TWqXJq{\cT_q(\bomega_q;\cX,\cJ)}
\def\TWJq{\cT_q(\bomega;\cJ)}
\def\TWqJq{\cT_q(\bomega_q;\cJ)}

 \def \xbar{\overline x}
  \def \ybar{\overline y}

\def\cA{{\mathcal A}}
\def\cB{{\mathcal B}}
\def\cC{{\mathcal C}}
\def\cD{{\mathcal D}}
\def\cE{{\mathcal E}}
\def\cF{{\mathcal F}}
\def\cG{{\mathcal G}}
\def\cH{{\mathcal H}}
\def\cI{{\mathcal I}}
\def\cJ{{\mathcal J}}
\def\cK{{\mathcal K}}
\def\cL{{\mathcal L}}
\def\cM{{\mathcal M}}
\def\cN{{\mathcal N}}
\def\cO{{\mathcal O}}
\def\cP{{\mathcal P}}
\def\cQ{{\mathcal Q}}
\def\cR{{\mathcal R}}
\def\cS{{\mathcal S}}
\def\cT{{\mathcal T}}
\def\cU{{\mathcal U}}
\def\cV{{\mathcal V}}
\def\cW{{\mathcal W}}
\def\cX{{\mathcal X}}
\def\cY{{\mathcal Y}}
\def\cZ{{\mathcal Z}}
\def\Ker{{\mathrm{Ker}}}

\def\NmQR{N(m;Q,R)}
\def\VmQR{\cV(m;Q,R)}

\def\Xm{\cX_m}

\def \A {{\mathbb A}}
\def \B {{\mathbb A}}
\def \C {{\mathbb C}}
\def \F {{\mathbb F}}
\def \G {{\mathbb G}}
\def \L {{\mathbb L}}
\def \K {{\mathbb K}}
\def \PP {{\mathbb P}}
\def \Q {{\mathbb Q}}
\def \R {{\mathbb R}}
\def \Z {{\mathbb Z}}
\def \fS{\mathfrak S}

\def\e{{\mathbf{\,e}}}
\def\ep{{\mathbf{\,e}}_p}
\def\eq{{\mathbf{\,e}}_q}

\def\\{\cr}
\def\({\left(}
\def\){\right)}
\def\fl#1{\left\lfloor#1\right\rfloor}
\def\rf#1{\left\lceil#1\right\rceil}

\def\Tr{{\mathrm{Tr}}}
\def\Nm{{\mathrm{Nm}}}
\def\Im{{\mathrm{Im}}}

\def \oF {\overline \F}

\newcommand{\pfrac}[2]{{\left(\frac{#1}{#2}\right)}}

\def \Prob{{\mathrm {}}}
\def\e{\mathbf{e}}
\def\ep{{\mathbf{\,e}}_p}
\def\epp{{\mathbf{\,e}}_{p^2}}
\def\em{{\mathbf{\,e}}_m}

\def\Res{\mathrm{Res}}
\def\Orb{\mathrm{Orb}}

\def\vec#1{\mathbf{#1}}
\def \va{\vec{a}}
\def \vb{\vec{b}}
\def \vm{\vec{m}}
\def \vu{\vec{u}}
\def \vv{\vec{v}}
\def \vx{\vec{x}}
\def \vy{\vec{y}}
\def \vz{\vec{z}}
\def\flp#1{{\left\langle#1\right\rangle}_p}
\def\T {\mathsf {T}}

\def\sfG {\mathsf {G}}
\def\sfK {\mathsf {K}}

\def\mand{\qquad\mbox{and}\qquad}

\title[Exponential Sums and  Power Generator]
{Exponential Sums with Sparse Polynomials and Distribution of the Power Generator}

\author{Subham Bhakta}
\address{School of Mathematics and Statistics, University of New South Wales, Sydney, NSW 2052, Australia.} 
\email{subham.bhakta@unsw.edu.au}

\author{Igor E. Shparlinski}
\address{School of Mathematics and Statistics, University of New South Wales, Sydney, NSW 2052, Australia.} 
\email{igor.shparlinski@unsw.edu.au}

\begin{abstract}   We obtain new bounds on complete rational exponential sums  with 
sparse polynomials modulo a prime, under some mild conditions on the degrees of the monomials 
of  such polynomials.
These  bounds, when they  apply,  give  explicit versions of a result of J.~Bourgain (2005). In turn, as an application,  we also obtain an explicit version of a result of J.~Bourgain (2010)
on rational   exponential sums  with sparse polynomials modulo an arbitrary composite number. We then use  one of these bounds to study the multidimensional distribution of the classical power generator
of pseudorandom numbers,  which has not been possible within previously known results. 
\end{abstract}

\subjclass[2020]{11L07, 11K45, 11T23, 65C10}

\keywords{Complete rational sums, sparse polynomials,  power generator,  pseudorandom numbers}

\maketitle
\tableofcontents
%
%
%

\section{Introduction}
\subsection{Set-up and motivation}
\label{sec:setup}

Let $q$ be a positive integer. Given a polynomial $f \in \Z_q[X]$ over  the residue ring
we define the following exponential sums 
\begin{equation}
\label{eq: Exp Sum}
S_q(f) = \sum_{x \in \Z_q^*} \eq(f(x)),
\end{equation}
over the group of units  $\Z_q^*$, where 
\[
\eq(z) = \exp(2 \pi i z/q),
\]
and we always assume that the elements of $\Z_q$ are represented by the set $ \{0, \ldots, q-1\}$.

Here we are mostly interested in  {\it sparse  polynomials\/}, that is, polynomials of the form 
\begin{equation}
\label{eq: Sparse Poly}
 f(X)=\sum_{i=1}^{r} a_iX^{e_i}  \in \Z_q[X], 
\end{equation}
 with a small (compared to the degree) number  $r$ of distinct exponents $e_r > \ldots >e_1 \ge 1$,
 for which we always assume 
 \begin{equation}
\label{eq: GCD}
\gcd(a_1, \ldots, a_r,q)=1
\end{equation}
  
We first discuss the case of prime $q=p$, where we use  $\F_p=\Z_p$ to denote the finite field of $p$ elements.

The celebrated result of Weil~\cite{Weil} asserts that for any non-constant 
polynomial $f \in \F_p[X]$,  over  a finite field $\F_p=\Z_p$ of $p$ elements, where $p$ is prime,   we have
\begin{equation}
\label{eq:Weil}
\left| S_p(f) \right| \le (d - 1) p^{1/2},
\end{equation}
where $d = \deg f$, 
see also, for example,~\cite[Chapter~11]{IwKow} and~\cite[Chapter~6]{Li1}. 

Clearly the bound~\eqref{eq:Weil} becomes trivial for $d \ge p^{1/2}+1$, which is 
the limit of our present knowledge. However, in the case of  sparse 
polynomials~\eqref{eq: Sparse Poly} where we can also assume  $p-1\ge e_r > \ldots >e_1 \ge 1$
 and $a_1, \ldots, a_r \in \F_p^*$, 
 better bounds are known. 
 
 Most of attention was dedicated to the monomial case, that is,  to $r=1$. Since the first improvement 
 of the general bound~\eqref{eq:Weil} given in~\cite{Shp1}, several  stronger bounds have been 
 obtained, see, for example~\cite{HBK,Kon, Shkr} and references therein. We observe, that  if $r=1$, 
 then without loss of generality, we can assume that  $d = e_1\mid p-1$, which reduces the corresponding 
 sums to estimating exponential sums of elements of subgroups of $\F_p^*$ of order $(p-1)/e$.
 In particular, by a result of Shkredov~\cite[Theorem~1]{Shkr}, for $f(X) = aX^d$ with $a \in \F_p^*$, we have
 \begin{equation}
\label{eq:Shkr}
S_p(f) \ll d^{1/2} p^{2/3} (\log p)^{1/6},
\end{equation}
where, as usual, the notations $U = O(V)$, $U \ll V$ and $ V\gg U$  
are equivalent to $|U|\le c V$ for some positive constant $c$, 
which throughout this work may depend only on $r$ (and
thus the implied constant  in~\eqref{eq:Shkr} is absolute). 
Note that in some ranges, stronger bounds are available, but~\eqref{eq:Shkr}
is quite sufficient for our purpose. 

For $r=2$ a large variety of bounds is available~\cite{GRS, OSV1, OSV2, ShpVol, ShpWang}, which require  rather strong restrictions on some greatest common divisors associated with the powers $d$ and $e$ of monomials 
in   $f(X) = aX^d + bX^e \in \F_p[X]$. For our purpose we use a result of 
 Cochrane and  Pinner~\cite[Theorem~1.3]{CoPin2}:
 \begin{equation}
\label{eq:CoPin}
|S_p(f)|\le \gcd(d-e, p-1) + 2.292 g^{13/46} p^{89/92}
\end{equation}
where $g = \gcd(d,e,p-1)$.

For larger values of $r$ there are also some estimates that go beyond the Weil bound~\eqref{eq:Weil}, 
see~\cite{CoPin1, MPSS, ShpWang}. However, they apply only to rather special choices  
of the exponents $e_1, \ldots, e_r$.  

The case of arbitrary polynomials $f$ as in~\eqref{eq: Sparse Poly}, under some  natural condition 
(which is essentially necessary as well) has been treated by Bourgain~\cite{Bourg1}. 
Namely, for arbitrary $r \ge 2$ and arbitrary polynomials $f$ as in~\eqref{eq: Sparse Poly}, under a natural condition that 
\[
\gcd(e_i, p-1), \, \gcd(e_i-e_{j}, p-1) \le p^{1-\varepsilon}, \qquad i, j =1, \ldots, r, \ i \ne j,
\]
for some fixed  $\varepsilon>0$, Bourgain~\cite[Theorem~1]{Bourg1},  using methods of additive combinatorics, 
 has obtained the bound 
\begin{equation}
\label{eq:Bourg}
\left| S_p(f) \right| \le  p^{1-\delta},
\end{equation}
with some $\delta>0$ depending only on $r$ and $\varepsilon>0$. Unfortunately, Bourgain~\cite{Bourg1}
has  not provided any explicit formulas for $\delta$ as a function of $r$ and $\varepsilon>0$, while 
an attempt to do so in~\cite[Theorems~4 and~5]{Pop} seems to be faulty. 
In particular, it appears that the argument in~\cite{Pop}  misquotes the result of~\cite[Corollary~16]{Shkr3}, which,  
after correcting, leads to exponentially smaller saving (and yet the rest of~\cite{Pop} 
is still to be verified).

For composite $q$ and ``dense'' polynomials 
$f(X)=a_dX^d + \ldots +a_1X \in \Z_q[X]$ of degree $d$ with $\gcd(a_1, \ldots, a_r,q)=1$, we have 
\[
S_{q}(f) \ll q^{1-1/d}, 
\]
see, for example,~\cite{DiQi,Stech}. Note that this is a slight improvement of the 
classical bound of Hua (containing an additional factor $q^{o(1)}$), see, for example,~\cite[Theorem~7.1]{Vau}.
For sparse polynomials as in~\eqref{eq: Sparse Poly} and~\eqref{eq: GCD}, but of {\it fixed\/}  degree $\deg f = e_r$ 
 a better bound of the type 
 \begin{equation}
\label{eq:Shp q}
|S_{q}(f)| \le q^{1-1/r+o(1)}
\end{equation}
is given in~\cite[Theorem~1]{Shp2}. 

Bourgain~\cite{Bourg2}, using the bound~\eqref{eq:Bourg}, has given several estimates
for the sums $S_{q}(f)$ with sparse polynomials  as in~\eqref{eq: Sparse Poly} and~\eqref{eq: GCD}, 
 For example, by~\cite[Theorem~1]{Bourg2}, 
for any $\eta> 0$ there exist  $\delta>0$ depending only on $r$ and $\eta>0$, such that for 
 \begin{equation}
\label{eq:Bourg q}
\left| S_q(f) \right| \le  q^{1-\delta},
\end{equation}
provided that 
\[
q \ge \exp\(d^\eta\), 
\]
where $d = \deg f = e_r$. In particular, the derivation of~\eqref{eq:Bourg q} in~\cite{Bourg2}
is based on~\eqref{eq:Bourg} from~\cite{Bourg2}. Here, despite that  our explicit 
version of~\eqref{eq:Bourg} is more restrictive in terms of the conditions 
on the exponents $e_1, \ldots, e_r$, it is  sufficient to get a version of~\eqref{eq:Bourg q} 
with an explicit value of $\delta$. A further improvement comes from the new way we 
treat  exponential sums modulo $p^m$, when $m \ge 2$ is not too large,  see Lemma~\ref{lem:pm} below.

 \subsection{Applications} 
 We also recall that the bound~\eqref{eq:CoPin} has been used in~\cite{OstShp}
 to study the uniformity of distribution properties of the  
{ \it power generator\/} of pseudorandom numbers,
More precisely,  given an integer  $e\geq 2$ and a prime  $p$, this generator outputs   the sequence $\{u_n\}$, defined by
\begin{equation}
\label{eq:Pow}
u_n \equiv u_{n-1}^e \pmod p, \quad 0 \le u_n \le  p-1, \qquad n = 1, 2,
\ldots\,,
\end{equation}
with the initial value $u_0 \not \equiv 0 \pmod p$. 
Since the  power generator has extensive  applications in cryptography,  and thus has been investigated in 
a number of works, see, for example,~\cite{Bourg1,ChouShp,FHS1,FHS2,FrKoSh,FrSh,FPS,Kon,KurPom,ShaHu} and the references therein, 
we also refer to surveys~\cite{TopWin, Win}.

We use our new bounds on exponential sums with sparse polynomials to study  simultaneous distribution 
of $s$-tuples of the  power generator, while the previous results allow explicit bounds only in the 
one-dimensional case as in~\cite{OstShp}. We also discuss the introduced in~\cite{OstShp} 
multivariate 
generalisations of the power generator to which our approach applies as well.

 \section{Main results} 
 
  \subsection{Exponential sums with sparse polynomials over $\F_p$}
Here we use the bounds~\eqref{eq:Shkr} and~\eqref{eq:CoPin} as the basis for the 
inductive argument of Bourgain~\cite{Bourg1}, in the form outlined by 
Garaev~\cite[Section~4.4]{Gar}  to derive a bound on exponential sums $S_p(f)$ 
with sparse polynomials $f$ as in~\eqref{eq: Sparse Poly}.
 
For a real $\varepsilon>0$, we set 
\begin{equation}
\label{eq:kappa2}
\kappa_1(\varepsilon) = \kappa_2(\varepsilon) =  \varepsilon,
 \end{equation}
 and define the sequence $\kappa_{r}(\varepsilon)$, $r =3,4, \ldots$,
 recursively as follows
\begin{equation}
\label{eq:kappar}
 \kappa_r(\varepsilon) =  \frac{\varepsilon}{t_r(\varepsilon)},
 \end{equation} 
 where
 \begin{equation}
\label{eq:tr-eps}
t_r(\varepsilon) =\rf{\frac{r-2- \varepsilon}{ 2\kappa_{r-1}(\varepsilon)}}+2.
  \end{equation}

\begin{thm}\label{thm:anyrbetter}
Let $f$ be as in~\eqref{eq: Sparse Poly} with $a_1, \ldots, a_r \in \F_p^*$. Assume that there exists $\varepsilon$ with $3/92\ge \varepsilon>0$ and such that for all  $1 \le i\neq j \le r$:
    \begin{align*}
&\gcd(e_i,p-1) \le   0.5 p^{7/26 +14\varepsilon/13}, \\
&  \gcd(e_i -e_j,p-1) \le   p^{8/13 +32\varepsilon/13}, \\
& \gcd(e_i,e_j, p-1) \le  p^{3/26-46\varepsilon/13} . 
\end{align*}  
    Then,   we have
 \[
 S_p(f) \ll p^{1 -  \kappa_r(\varepsilon)}.
\]
\end{thm}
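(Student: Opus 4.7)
The plan is to proceed by induction on $r$, with the base case $r=2$ supplied by the Cochrane--Pinner bound \eqref{eq:CoPin}, and the inductive step $r \ge 3$ carried out via the scheme of Bourgain~\cite{Bourg1} as organised by Garaev in~\cite[Section~4.4]{Gar}. The three gcd hypotheses play distinct roles: the bound on $\gcd(e_i, p-1)$ feeds the monomial bound~\eqref{eq:Shkr}, yielding savings of at least $\varepsilon$ on the rank-one sums that emerge at the bottom of the recursion; the bound on $\gcd(e_i-e_j, p-1)$ controls the ``trivial'' first term of~\eqref{eq:CoPin}; and the bound on $\gcd(e_i, e_j, p-1)$, together with the restriction $\varepsilon \le 3/92$, is calibrated precisely to the main term of~\eqref{eq:CoPin}.

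\emph{Base case $r=2$.}  Write $f(X) = a_1 X^{e_1} + a_2 X^{e_2}$ and apply~\eqref{eq:CoPin} with $g = \gcd(e_1,e_2,p-1)$. The second hypothesis gives $\gcd(e_2-e_1,p-1) \le p^{8/13+32\varepsilon/13} \le p^{1-\varepsilon}$, valid since $\varepsilon \le 3/92 < 1/9$. The third hypothesis gives $g \le p^{3/26-46\varepsilon/13}$, whence a direct computation of exponents yields
\[
g^{13/46}\,p^{89/92} \le p^{(3/26)(13/46) - \varepsilon + 89/92} = p^{3/92-\varepsilon + 89/92} = p^{1-\varepsilon}.
\]
Summing produces $|S_p(f)| \ll p^{1-\varepsilon} = p^{1-\kappa_2(\varepsilon)}$. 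Note that the upper bound $\varepsilon \le 3/92$ surfaces here to keep the exponent $3/26-46\varepsilon/13$ in the third hypothesis non-negative.

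\emph{Inductive step $r \ge 3$.}  Set $t = t_r(\varepsilon)$ and expand the $2t$-th moment:
\[
|S_p(f)|^{2t} = \sum_{\vx,\vy \in (\F_p^*)^t}\ep\!\left(\sum_{i=1}^r a_i \Bigl(\sum_{j=1}^t x_j^{e_i} - \sum_{j=1}^t y_j^{e_i}\Bigr)\right).
\]
Following the Bourgain/Garaev framework, freezing all but two of the $2t$ variables casts the inner double sum (for ``generic'' fixings) into the form $S_p(g)$ for a sparse polynomial $g$ of rank at most $r-1$ with coefficients built from the $a_i$ and the fixed variables. The gcd hypotheses at rank $r$ imply the analogous hypotheses for any subset of $\{e_1,\ldots,e_r\}$, so the inductive hypothesis applies and yields a saving of $\kappa_{r-1}(\varepsilon)$ on each such inner sum. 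The ``degenerate'' tuples, where coefficient coalescence obstructs this reduction, are handled separately using the bound on $\gcd(e_i, p-1)$ together with~\eqref{eq:Shkr}. Balancing the inductive gain against the combinatorial loss of order $p^{2t-2}$ coming from the $2t$-th moment plus the $r-2$ additional monomials that must be absorbed yields
\[
|S_p(f)|^{2t} \ll p^{2t-2\varepsilon + o(1)},
\]
and extracting the $2t$-th root gives $|S_p(f)| \ll p^{1-\varepsilon/t_r(\varepsilon)} = p^{1-\kappa_r(\varepsilon)}$.

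The main obstacle is the quantitative execution of this inductive step: one must track how each of the three gcd hypotheses at rank $r$ descends to the auxiliary sparse polynomials produced by the Hölder expansion, verify that these reduced polynomials really do have the claimed rank on a dense set of fixings, and carefully quantify the contribution of the degenerate tuples. The precise form of $t_r(\varepsilon)$ in~\eqref{eq:tr-eps}, essentially $\lceil (r-2-\varepsilon)/(2\kappa_{r-1}(\varepsilon))\rceil+2$, is exactly what is needed to close the resulting balance equation, and its derivation is the most delicate point of the argument.
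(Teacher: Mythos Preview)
Your base case $r=2$ is fine, and you have the right overall architecture (induction on $r$, Bourgain/Garaev scheme). But the mechanism you describe for the inductive step is wrong, and the gap is not merely expository. Freezing all but two of the $2t$ variables in the expansion of $|S_p(f)|^{2t}$ does \emph{not} produce a polynomial of rank $\le r-1$: the remaining variable still carries all $r$ monomials $X^{e_1},\ldots,X^{e_r}$, with coefficients $a_i$ unchanged. No amount of ``generic fixing'' or ``coefficient coalescence'' collapses a monomial here. The rank reduction in the paper comes from an entirely different source: one first uses the Mordell averaging trick $x\mapsto xz$ (Lemma~\ref{lem:S and I}) to pass from $S_p(f)$ to the counting function $I_{r,t}$ of the system~\eqref{eq: Syst rt}, and it is the \emph{trivial} inequality $I_{r,t}\le I_{r-1,t}$ (dropping one of the $r$ equations) that lowers the rank. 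Orthogonality then rewrites $I_{r-1,t}$ as an average over $(\lambda_1,\ldots,\lambda_{r-1})\in\F_p^{r-1}$ of $\bigl|\sum_y \ep(\lambda_1 y^{e_1}+\cdots+\lambda_{r-1}y^{e_{r-1}})\bigr|^{2t}$, and it is to these $(r-1)$-sparse sums (with arbitrary coefficients $\lambda_i$, not the original $a_i$) that the inductive hypothesis is applied.

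A second, related omission is that you never invoke the quantity $I_{2,2}$. In the paper this is the workhorse of the inductive step: after the $\kappa_{r-1}(\varepsilon)$ saving strips the exponent of the moment from $2t$ down to $4$, the remaining fourth moment is bounded by $I_{r-1,2}\le I_{2,2}$, and it is the estimate $I_{2,2}\ll p^{3-4\varepsilon}$ (Lemma~\ref{lem:I22}, itself a consequence of \cite[Theorem~7.1]{CoPin2} on the counting side, not the pointwise bound~\eqref{eq:CoPin}) that supplies the final saving of $4\varepsilon$ in $S_p(f)^{4t}\ll p^{4t-4\varepsilon}$. All three gcd hypotheses enter the inductive step \emph{only} through this $I_{2,2}$ bound; your assignment of the first hypothesis to ``degenerate tuples via~\eqref{eq:Shkr}'' in the inductive step is not how the argument runs. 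Without the passage through $I_{r,t}$ and the $I_{2,2}$ input, the balance equation you wrote, $|S_p(f)|^{2t}\ll p^{2t-2\varepsilon}$, has no derivation.
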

 
\begin{rem}There could be several variants of Theorem~\ref{thm:anyrbetter}
with slightly relaxed conditions on the corresponding greatest common divisors but with weaker 
estimates. Since, generically, the greatest common divisor of two integers tends to be small, 
we imposed the above more stringent conditions, which however maximise the strength of the bound. 
\end{rem}

Furthermore, we also have a 
simpler version of Theorem~\ref{thm:anyrbetternew} with $\varepsilon= 3/92$, restricting to $\gcd(e_i,p-1) =1$, 
while relaxing the condition on $ \gcd(e_i -e_j,p-1)$.

It is convenient to define 
 \begin{equation}
\label{eq:rho-r}
\rho_r =  \kappa_r(3/92) .
  \end{equation}

\begin{thm}\label{thm:anyrbetternew}
Let $f$ be as in~\eqref{eq: Sparse Poly} with $a_1, \ldots, a_r \in \F_p^*$. Assume that for all  $1 \le i\neq j \le r$:
\[
\gcd(e_i,p-1)=1 \mand \gcd(e_i -e_j,p-1) \le   p^{16/23}.
\]
Then,  we have 
\[
S_p(f) \ll p^{1 -  \rho_r}.
\]
\end{thm}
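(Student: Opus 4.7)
The plan is to obtain Theorem~\ref{thm:anyrbetternew} as an immediate specialization of Theorem~\ref{thm:anyrbetter} at the particular value $\varepsilon = 3/92$, which is precisely the value appearing in the definition~\eqref{eq:rho-r} of $\rho_r$. Since this value lies in the admissible range $3/92 \ge \varepsilon > 0$ of Theorem~\ref{thm:anyrbetter} and the conclusion matches verbatim, the entire task reduces to checking that the three gcd hypotheses imposed in Theorem~\ref{thm:anyrbetter} are implied by the two simpler hypotheses stated in Theorem~\ref{thm:anyrbetternew}.

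I first verify the exponent appearing in the hypothesis on $\gcd(e_i - e_j, p-1)$. Substituting $\varepsilon = 3/92$ into $8/13 + 32\varepsilon/13$ gives
\[
\frac{8}{13} + \frac{32 \cdot 3}{13 \cdot 92} = \frac{8}{13} + \frac{24}{13 \cdot 23} = \frac{184 + 24}{299} = \frac{208}{299} = \frac{16}{23},
\]
so the condition $\gcd(e_i - e_j, p-1) \le p^{16/23}$ from Theorem~\ref{thm:anyrbetternew} is exactly the corresponding condition in Theorem~\ref{thm:anyrbetter}.

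Next I check that the remaining two conditions of Theorem~\ref{thm:anyrbetter} are automatically satisfied under the assumption $\gcd(e_i, p-1) = 1$. For the first condition, the right-hand side $0.5 p^{7/26 + 14\varepsilon/13}$ at $\varepsilon = 3/92$ exceeds $1$ for all sufficiently large $p$, so the inequality $1 \le 0.5 p^{7/26 + 14\varepsilon/13}$ holds outside a bounded set of primes, which can be absorbed into the implied constant via the trivial estimate $|S_p(f)| \le p$. For the third condition, $\gcd(e_i, p-1) = 1$ immediately forces $\gcd(e_i, e_j, p-1) = 1$, while the target exponent evaluates to
\[
\frac{3}{26} - \frac{46 \cdot 3}{13 \cdot 92} = \frac{3}{26} - \frac{3}{26} = 0,
\]
so the required inequality $1 \le p^0 = 1$ is trivial.

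There is no genuine analytic obstacle, since the real work is already carried out inside Theorem~\ref{thm:anyrbetter}; Theorem~\ref{thm:anyrbetternew} is essentially the ``clean corollary'' obtained by choosing $\varepsilon$ so that both auxiliary gcd conditions degenerate. The only subtle point worth highlighting is the arithmetic coincidence that $\varepsilon = 3/92$ is exactly the value at which the third gcd exponent vanishes, which explains why this particular choice of $\varepsilon$ is singled out to define $\rho_r$ in~\eqref{eq:rho-r}; any smaller $\varepsilon$ would leave a nontrivial residual condition on $\gcd(e_i, e_j, p-1)$, and any larger $\varepsilon$ would fall outside the range allowed by Theorem~\ref{thm:anyrbetter}.
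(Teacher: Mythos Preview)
Your proof is correct: the arithmetic check that at $\varepsilon=3/92$ the exponent $8/13+32\varepsilon/13$ becomes $16/23$ and the exponent $3/26-46\varepsilon/13$ becomes $0$ shows that the hypotheses of Theorem~\ref{thm:anyrbetternew} imply those of Theorem~\ref{thm:anyrbetter} at this endpoint, and the conclusion $S_p(f)\ll p^{1-\kappa_r(3/92)}=p^{1-\rho_r}$ follows. The paper's own proof runs the inductive argument for the two theorems in parallel (invoking Lemma~\ref{lem:newI22} in place of Lemma~\ref{lem:I22} for the second), but since Lemma~\ref{lem:newI22} is nothing more than Lemma~\ref{lem:I22} at $\varepsilon=3/92$ under the coprimality assumption, your derivation as a direct corollary is the same argument packaged more efficiently.
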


\begin{rem}
Using Theorems~\ref{thm:anyrbetter} and~\ref{thm:anyrbetternew} combined with the Koksma--Sz\"usz 
inequality, see  Lemma~\ref{lem:K-S}, one can get  explicit versions of a result of Bombieri, 
Bourgain and Konyagin~\cite[Theorem~17]{BBK}, see also~\cite[Corollary~9]{BKS}, counting the 
number of solutions to the system of congruences
\[
a_i x^{e_i} \equiv y_i \pmod p, \qquad x \in \F_p, \ y_i \in [A_i+1, A_i + H_i], \ i =1, \ldots, r,
\]
with some integers $A_i$ and $1\le H_i \le p$. 
\end{rem}

  \subsection{Exponential sums with sparse polynomials over $\Z_q$}  
We merge the bound of Theorem~\ref{thm:anyrbetternew} with the argument 
Bourgain~\cite{Bourg2} to derive the following explicit version of~\eqref{eq:Bourg q} .

We now define  
 \begin{equation}
\label{eq:sigma-r}
\sigma_r =  \min\{\kappa_r(3/184),1/(50r^3)\} .
  \end{equation}
In fact $\sigma_r <   \kappa_r(3/184)$ only for $r = 2$, that is,
 alternatively we can define
 \[
 \sigma_r = \begin{cases} 
  \kappa_r(3/184),& \text{for} \ r=1~\mathrm{and}~r\ge 3,\\
  1/400,& \text{for} \ r =  2.
  \end{cases}
\]

\begin{thm}\label{thm:coprimeq}
Let $q \ge 2$ be an integer and let $f$ be as ~\eqref{eq: Sparse Poly} and~\eqref{eq: GCD}
of degree $d$. Then uniformly over $d$, we have
\[
S_q(f)  \ll \exp\(d^{18}\) q^{1 - \sigma_r+o(1)},  \qquad \text{as}\ q \to \infty, 
\]
where $\sigma_r$ is given by~\eqref{eq:sigma-r}. 
\end{thm}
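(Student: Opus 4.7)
The plan is to merge Theorem~\ref{thm:anyrbetternew} (or more flexibly Theorem~\ref{thm:anyrbetter} with $\varepsilon = 3/184$) with the composite-modulus scheme of Bourgain~\cite{Bourg2}, and to use Lemma~\ref{lem:pm} to handle prime-power contributions efficiently. The first move is the standard Chinese Remainder Theorem reduction: writing $q = \prod_{p\mid q} p^{m_p}$ and decomposing $\eq$ into its $p$-local additive characters yields
\[
|S_q(f)| = \prod_{p \mid q} |S_{p^{m_p}}(f_p)|,
\]
where $f_p \in \Z_{p^{m_p}}[X]$ has coefficients $(q/p^{m_p})a_i \pmod{p^{m_p}}$. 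The coprimality hypothesis~\eqref{eq: GCD} ensures that at least one coefficient of each $f_p$ is a unit modulo $p$, so every factor is amenable to a nontrivial estimation.

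Next I would estimate each prime-power factor separately. For $m_p \ge 2$ I would invoke Lemma~\ref{lem:pm}, the new prime-power estimate highlighted in the introduction as the source of improvement over~\cite{Bourg2}. For $m_p = 1$, a prime $p$ is deemed \emph{good} if, after reducing each exponent modulo $p-1$, the three gcd conditions of Theorem~\ref{thm:anyrbetter} with $\varepsilon = 3/184$ all hold; otherwise $p$ is \emph{bad}. Good primes yield the power saving $|S_p(f_p)| \ll p^{1 - \kappa_r(3/184)}$. For bad primes one falls back on the weaker sparse-polynomial bound~\eqref{eq:Shp q}, which gives a saving of order $p^{-1/r + o(1)}$ independent of gcd conditions (and which, for $r = 2$, is the source of the alternative clause $\sigma_r = 1/(50r^3)$, featured only at $r = 2$), or on the trivial bound when $p$ is very small.

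The final step is to multiply. A prime $p$ can be bad only if $p-1$ shares an unusually large factor with some $e_i$, with some $e_i - e_j$, or with $\gcd(e_i,e_j)$; since each of these divisors is bounded by $d$, a short numerical check with the exponent $3/26 - 46\varepsilon/13$ at $\varepsilon = 3/184$ shows that bad primes must satisfy $p \le d^{O(1)}$, with the most stringent condition pinning the threshold near $p \le d^{18}$. A crude bound $\prod_{p \le d^{18}} p^{m_p} \le \exp(d^{18})$ absorbs them into the prefactor of the theorem, while the product over good primes supplies the saving $q^{-\sigma_r + o(1)}$ after standard divisor-function estimates. The principal obstacle is precisely this uniform bookkeeping: tracking the ranges of bad primes across all three gcd conditions, verifying that the halving of $\varepsilon$ from $3/92$ to $3/184$ provides enough slack to beat the divisor-counting losses, and ensuring that the contributions from Lemma~\ref{lem:pm} at higher prime powers do not erode the per-prime saving. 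This is what forces the specific form of $\sigma_r$ and the exponent $18$ in the prefactor.
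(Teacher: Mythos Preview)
Your overall architecture (CRT factorisation, Theorem~\ref{thm:anyrbetter} with $\varepsilon=3/184$ for $m_p=1$, Lemma~\ref{lem:pm} for higher powers, bad primes absorbed into $\exp(d^{18})$) matches the paper. But two genuine gaps remain.

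\textbf{Large $m_p$.} Lemma~\ref{lem:pm} yields $|S_{p^m}(f)|\ll p^{m-1/r}D^{1/r}$; even with $D\le p^{3/5}$ this saves only $p^{-2/(5r)}$, i.e.\ $(p^m)^{-2/(5rm)}$, which tends to $0$ as $m\to\infty$. So Lemma~\ref{lem:pm} alone cannot produce a \emph{fixed} power saving $\sigma_r$ uniformly in $m_p$. The paper therefore splits off the range $m_p>20r^2$ and treats it by~\cite[Proposition~7]{Bourg2}, which gives a saving $1/(10r^2)$ in the exponent of $p^{m_p}$ by a different $p$-adic stationary-phase argument. You cannot bypass this: if $q=p^m$ with one large prime $p$ and $m>20r^2$, nothing in your outline yields a nontrivial bound. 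You flag ``higher prime powers'' as an obstacle, but the resolution requires an external input you did not name.

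\textbf{Hypotheses of Lemma~\ref{lem:pm} and the source of $1/(50r^3)$.} Lemma~\ref{lem:pm} needs $\gcd(e_i,p^{m_p})\le p^{m_p/3}$, which fails on a finite set of primes (the paper's $\cP_1$, bounded by $d^{3r}$) that must be peeled off first. It also needs $D\le p^{3/5}$ to give the saving above, so the ``bad prime'' sieve via the three gcd conditions has to cover all $m_p\le 20r^2$, not just $m_p=1$; this is why the paper's $q_3$ carries the factor $p^{20r^2}$ and ultimately why the threshold $p\le d^{52/3}$ becomes $\exp(d^{18})$ after Chebyshev. Finally, the constant $1/(50r^3)$ in $\sigma_r$ does \emph{not} come from~\eqref{eq:Shp q} on bad primes (that bound requires fixed degree and is inapplicable here): it is exactly the saving $2/(5r\cdot 20r^2)$ that Lemma~\ref{lem:pm} delivers at the worst case $m_p=20r^2$, $D\le p^{3/5}$. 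For $r=2$ this is smaller than $\kappa_2(3/184)=3/184$, which explains the exceptional value $\sigma_2=1/400$.
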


\begin{rem} We emphasise that the main point of Theorem~\ref{thm:coprimeq} is that the 
degree $d$ may grow with $q$, as the saving $\sigma_r$ and  the implied constant depend only on 
the sparsity $r$ rather than on $d$, while  the rate of decay of $o(1)$ in the exponent of $q$ 
depends only on $q$. Clearly, for a fixed $d$,  the bound~\eqref{eq:Shp q} of~\cite[Theorem~1]{Shp2}
is much stronger. 
\end{rem}

\begin{rem} The power $18$ of $d$ in  Theorem~\ref{thm:coprimeq} is not optimised
and can easily be reduced and the cost of redefining and reducing $\sigma_r$ as well. We have also ignored the factor $1/50$ at the front of $d^{18}$ which our argument gives, see 
the final estimate in the proof of   Theorem~\ref{thm:coprimeq}.
\end{rem}

\begin{rem}
The limit of our technique with respect to the dependence on $d$   is $d^{26/3+\varepsilon}$, with any $\varepsilon > 0$,  instead of $d^{18}$ in Theorem~\ref{thm:coprimeq} due to~\eqref{eq: cond 4}. However, if we impose that the exponents $e_i$ are pairwise coprime, this issue disappears, leaving~\eqref{eq: cond 2} as the  bottleneck. 
Thus,  tightening~\eqref{eq: cond 2} and~\eqref{eq: cond 3} according to the conditions in Theorem~\ref{thm:anyrbetter}, provides a much smaller exponent than $26/3$.
\end{rem}

\subsection{Multidimensional distribution of the power generator}
 \label{sec:mulf distr} 
  
We recall that the  {\it discrepancy\/}  $D(\Gamma)$ 
of a sequence $\Gamma=(\bgamma_n)_{n= 1}^N$ of $N$ points 
in the $s$-dimensional unit cube $[0,1]^s$, 
 is defined by
\[
 D(\Gamma) =\sup_{\cB \subseteq[0,1]^s}\left|\frac{N(\cB)}{N}- ( \beta_1-\alpha_1)   \cdots(\beta_s-\alpha_s)\right|,
\]
with the supremum is taken over all boxes  
\[
\cB = [\alpha_1, \beta_1] \times \ldots  \times [\alpha_s, \beta_s]\subseteq  [0,1)^s,
\]
and where  $N(\cB)$ is the number of elements of $\Gamma$, which fall in $\cB$, 
and 
\[ 
\vol \cB = ( \beta_1-\alpha_1)   \cdots(\beta_s-\alpha_s)
\] 
is the volume of $\cB$. 

Furthermore, for an integer $\vartheta \not \equiv 0 \pmod p$, we denote by $D_{e, \vartheta,s}(p,N)$ 
the discrepancy of the points 
\[\(\frac{u_{n}}{p}, \ldots, \frac{u_{n+s-1}}{p}\),  \qquad n =1, \ldots, N,
\]
 generated by~\eqref{eq:Pow} 
with $u_0 = \vartheta$. 

For $s=1$, a bound on $D_{e, \vartheta,1}(p,N)$ is given by~\cite[Corollary~3]{OstShp}. However, the 
approach of~\cite{OstShp} does not extend to $s\ge 2$. Here we use 
Theorem~\ref{thm:anyrbetternew} to establish such a result.

Clearly, if $\gcd(e,p-1)=1$ and $\gcd(\vartheta, p) = 1$,  then it is easy to see that 
the sequence~\eqref{eq:Pow} is purely periodic. 

\begin{thm}\label{thm:discr PowGen} Let $\gcd(\vartheta, p) = 1$ be of multiplicative order $T$ modulo $p$
and let  $\tau$  be 
the period of  the sequence~\eqref{eq:Pow}. Then,  for any fixed $s\geq 1$, for $N \le \tau$ we have the following estimate
\[
D_{e, \vartheta,s}(p,N) \le  N^{-1/2} p^{1/2 -  0.5  \rho_{2s}+o(1)}, 
\]
where $\rho_r$ is given by~\eqref{eq:rho-r}. 
\end{thm}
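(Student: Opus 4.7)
The plan is to combine the multidimensional Erd\H{o}s--Tur\'an--Koksma inequality (Lemma~\ref{lem:K-S}) with the exponential-sum bound of Theorem~\ref{thm:anyrbetternew}, bridging the two by a shift-and-Cauchy--Schwarz trick that converts an $s$-sparse exponential sum into a $2s$-sparse one.

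The first step reduces the discrepancy to bounds on the exponential sums
\[
S(\mathbf{c})=\sum_{n=1}^N \ep\!\left(\sum_{j=0}^{s-1} c_{j+1}\,u_{n+j}\right),
\]
for non-zero integer vectors $\mathbf{c}=(c_1,\dots,c_s)$ with bounded coordinates. Using the fundamental identity $u_{n+j}\equiv u_n^{e^j}\pmod p$, this becomes $S(\mathbf{c})=\sum_{n=1}^{N}\ep(F(u_n))$, where $F(X)=\sum_{j=0}^{s-1} c_{j+1}X^{e^j}$ is an $s$-sparse polynomial with exponents $1,e,\dots,e^{s-1}$.

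For the second step I fix a parameter $K$ to be optimised, insert shifts $n\to n+k$ with $k=0,\dots,K-1$, and apply Cauchy--Schwarz in the $n$-variable. Standard manipulations then yield
\[
|S(\mathbf{c})|^2\ \ll\ \frac{N^2}{K}+K^2+N\,\max_{1\le d<K}\left|\sum_m \ep(H_d(u_m))\right|,
\]
where $m$ ranges over a segment of length at most $N$ and $H_d(X)=\sum_{j=0}^{s-1}c_{j+1}\bigl(X^{e^j}-X^{e^{j+d}}\bigr)$ has at most $2s$ monomials. The balance $K\asymp N^{2/3}$ absorbs the first two error terms; the remaining task is to bound the inner $2s$-sparse sum by $p^{1-\rho_{2s}+o(1)}$.

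For that final bound I complete the incomplete $m$-sum to a full orbit sum (with logarithmic loss) and invoke Theorem~\ref{thm:anyrbetternew} with $r=2s$. The first gcd hypothesis is automatic: $\gcd(e,p-1)=1$ (implicit in the preamble to the theorem) forces every exponent $e^j$ to be coprime to $p-1$; the pairwise differences $e^{j_1+d}-e^{j_2}$ factor as $e^{\min}(e^c-1)$, reducing the second hypothesis to controlling $\gcd(e^c-1,p-1)$ for $c\le 2s+K$. The principal obstacle is the passage from the complete orbit sum to a complete sum over $\F_p^*$: since the orbit $\cO$ is generally \emph{not} a multiplicative subgroup of $\F_p^*$, a naive Parseval-type extension loses a factor $\sqrt{|\cO|}$ and is insufficient. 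A sharper argument---either exploiting the invariance of $\cO$ under $x\mapsto x^e$ to use a structured character decomposition, or appealing to twisted versions of Theorem~\ref{thm:anyrbetternew} for character sums $\sum_{x\in\F_p^*}\chi(x)\,\ep(H_d(x))$---will be required. Granting this, we obtain $|S(\mathbf{c})|^2\ll Np^{1-\rho_{2s}+o(1)}$, and the claimed discrepancy estimate follows on dividing by $N$ and inserting back into the Erd\H{o}s--Tur\'an--Koksma bound.
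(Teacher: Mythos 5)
Your outline follows the paper's skeleton (Koksma--Sz\"usz reduction, the identity $u_{n+j}\equiv u_n^{e^j}\pmod p$, a shift-and-Cauchy--Schwarz step, and an appeal to Theorem~\ref{thm:anyrbetternew} with $r=2s$), but the step you flag as ``the principal obstacle'' --- passing from a sum over the orbit to a complete sum over $\F_p^*$ --- is exactly the point where the argument must be closed, and you leave it open (``a sharper argument will be required \dots Granting this''). This is a genuine gap, and the fix is not a twisted or character-decomposition version of the theorem; it is a reordering of your own steps. After Cauchy--Schwarz you should be holding
\[
\frac{1}{K}\sum_{n}\Bigl|\sum_{k=1}^{K}\ep\bigl(\textstyle\sum_j a_j\,\vartheta^{\,e^{n+k+j}}\bigr)\e(\alpha k)\Bigr|^2 ,
\]
with the square \emph{not yet expanded}. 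The summand is non-negative and depends on $n$ only through $x=\vartheta^{e^{n}}$; since $N\le\tau$, the map $n\mapsto\vartheta^{e^n}$ hits each element of $\F_p^*$ at most $O(1)$ times for $|n|\le N$, so by positivity you may extend the outer sum to all $x\in\F_p^*$ at the cost of a bounded factor. Only \emph{then} expand the square: the inner object becomes $\sum_{k,m}\bigl|\sum_{x\in\F_p^*}\ep\bigl(\sum_j a_j(x^{e^{k+j}}-x^{e^{m+j}})\bigr)\bigr|$, i.e.\ genuinely complete sums over $\F_p^*$ of $2s$-sparse polynomials, to which Theorem~\ref{thm:anyrbetternew} applies. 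In your version you expand the square first and end up with an incomplete sum $\sum_m\ep(H_d(u_m))$ over an orbit segment, at which point positivity is lost and no completion is available --- hence the dead end you describe. (The paper implements the shift step via the Friedlander--Iwaniec completing lemma with $K=N$, which is why no $N^2/K+K^2$ bookkeeping or $K\asymp N^{2/3}$ optimisation appears.)

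A second, smaller omission: to apply Theorem~\ref{thm:anyrbetternew} you must verify $\gcd(e^{k+i}-e^{m+j},p-1)\le p^{16/23}$ for the pairs $(k,m)$ that survive, and this fails for some pairs. The paper handles this by counting, via the multiplicative-order lemmas (Lemmas~\ref{lem:Large GCD} and~\ref{lem:MultOrd and GCD}), the number $U$ of exceptional pairs with a large gcd, estimating their contribution trivially by $p$, and checking that $U\,p$ is dominated by the main term (this is where the hypothesis $N\le\tau$ and the ratio $T/\tau$ enter the final bound). Your remark that the differences factor as $e^{\min}(e^c-1)$ is the right starting point, but without the counting of bad pairs the application of the theorem is not justified uniformly in $(k,m)$.
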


Obviously, the bound of Theorem~\ref{thm:discr PowGen}   is nontrivial for $\tau \ge N \ge  p^{1-  \rho_{2s}+\varepsilon}$ with any fixed $\varepsilon > 0$.

\begin{rem}It is easy to see that  the bound of  Theorem~\ref{thm:discr PowGen} also holds for 
the discrepancy of the points
\[\(\frac{u_{n+h_1}}{p}, \ldots, \frac{u_{n+h_s}}{p}\),  \qquad n =1, \ldots, N,
\]
corresponding to arbitrary shifts $0 \le h_1 < \ldots < h_s < \tau$ satisfying $h_s - h_1 = p^{o(1)}$ as $p \to \infty$. 
\end{rem}

\section{Proofs of Theorems~\ref{thm:anyrbetter} and~\ref{thm:anyrbetternew}}

\subsection{Preliminaries} 
We use the inductive idea of Bourgain~\cite{Bourg1}, however we apply it in the form outlined by  
Garaev~\cite[Section~4.4]{Gar}, also applied 
in~\cite{BBG,OSV2}.  It is important to observe that the basis of induction is formed by  the cases $r=1$ and $r=2$
(just $r=1$ is not enough). 

We prove both Theorems~\ref{thm:anyrbetter} and~\ref{thm:anyrbetternew} simultaneously, giving 
more details for the proof of Theorem~\ref{thm:anyrbetter}, which is more technically involved. 

Let us consider the exponential sum $S_p(f)$ as in~\eqref{eq: Exp Sum} with polynomials $f$ as in~\eqref{eq: Sparse Poly}. 
We also use  $I_{r,t}$ to denote  the number of solutions to  the system of equations
 \begin{equation}\label{eq: Syst rt}
 x_1^{e_i} + \cdots + x_t^{e_i}  =  y_1^{e_i} + \cdots + y_t^{e_i},  \qquad i = 1, \ldots, r, 
\end{equation}
in  $x_1, y_1, \ldots, x_t, y_t \in \F_p^*$. 

\begin{lem}
\label{lem:S and I}    
For any integer $s,t \ge 1$ we have
\[
S_p(f) ^{2st} \ll  p^{2st-2t-2s+r}  I_{r,s}I_{r,t}.  
\]
\end{lem}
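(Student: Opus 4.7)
The plan is to reduce the $2st$-th moment of $|S_p(f)|$ to the $2(s+t)$-th moment by the trivial pointwise estimate $|S_p(f)|\le p$, and then to bound the latter by a single application of Cauchy--Schwarz that produces the two factors $I_{r,s}$ and $I_{r,t}$ in a symmetric way.

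First, I would use $|S_p(f)| \le p$ to peel off the ``excess'' powers:
\[
|S_p(f)|^{2st}=|S_p(f)|^{2(s+t)}\cdot|S_p(f)|^{2(st-s-t)}\le |S_p(f)|^{2(s+t)}\cdot p^{2(st-s-t)},
\]
which is legitimate since $st-s-t=(s-1)(t-1)-1\ge 0$ when $s,t\ge 2$ (the boundary cases $s=1$ or $t=1$ are easy direct consequences of the well-known bound $|S_p(f)|^{2s}\le p^r I_{r,s}$ obtained from Cauchy--Schwarz on $S_p(f)^s$). This reduces the lemma to establishing
\[
|S_p(f)|^{2(s+t)}\ll p^r\, I_{r,s}\, I_{r,t}.
\]

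Second, I would use the identity $|z|^{2(s+t)}=|z^s\overline z^{\,t}|^2$, valid for any complex $z$, with $z=S_p(f)$, and expand the product as a sum over $\vec x\in(\F_p^*)^s$ and $\vec y\in(\F_p^*)^t$. Collecting the exponential factor by the difference vector
\[
\vec d=\Bigl(\sum_{j=1}^{s}x_j^{e_i}-\sum_{k=1}^{t}y_k^{e_i}\Bigr)_{i=1}^{r}\in\F_p^r,
\]
yields $S_p(f)^s\overline{S_p(f)}^{\,t}=\sum_{\vec d\in\F_p^r}M(\vec d)\,\e_p(\vec a\cdot\vec d)$ for a nonnegative weight $M$ supported on at most $p^r$ vectors. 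Applying Cauchy--Schwarz over $\vec d$ produces
\[
|S_p(f)|^{2(s+t)}\le p^r\sum_{\vec d\in\F_p^r}M(\vec d)^2.
\]

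Third, I would unfold $\sum_{\vec d}M(\vec d)^2$ by introducing a second copy $(\vec x',\vec y')$; the resulting constraint $g_{s,i}(\vec x)-g_{t,i}(\vec y)=g_{s,i}(\vec x')-g_{t,i}(\vec y')$ is rearranged into
\[
\sum_{j=1}^{s}(x_j^{e_i}-x'^{\,e_i}_j) \;=\; \sum_{k=1}^{t}(y_k^{e_i}-y'^{\,e_i}_k),\qquad i=1,\dots,r,
\]
and stratified by the common-value vector $\vec c\in\F_p^r$. For each $\vec c$ the number of admissible $(\vec x,\vec x')$ is a $\vec c$-shift of the system~\eqref{eq: Syst rt} with parameter $s$, and analogously for $(\vec y,\vec y')$ with parameter $t$; using the fact that the unshifted diagonal value is maximal (namely $I_{r,s}(\vec 0)=I_{r,s}$ and $I_{r,t}(\vec 0)=I_{r,t}$) one deduces $\sum_{\vec d}M(\vec d)^2\ll I_{r,s}\,I_{r,t}$. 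Combining this with the reduction in the first step delivers the stated inequality.

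The main obstacle is the sharpness of the last factorisation step: a careless use of Cauchy--Schwarz at this stage produces an extra factor of $p^r$ coming from the size of the intermediate common-value space, so the gain in the exponent from $2r$ down to $r$ relies on carefully extracting the diagonal contribution from the fibred correlation $\sum_{\vec c}I_{r,s}(\vec c)I_{r,t}(\vec c)$.
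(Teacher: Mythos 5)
Your argument breaks at the decisive third step. Writing $S_p(f)^s\overline{S_p(f)}^{\,t}=\sum_{\vec d}M(\vec d)\,\ep(\vec a\cdot\vec d)$ and applying Cauchy--Schwarz over $\vec d$ is fine, but the quantity $\sum_{\vec d}M(\vec d)^2$ is \emph{exactly} $I_{r,s+t}$ (the number of solutions of~\eqref{eq: Syst rt} with $s+t$ variables on each side), and the claim that it is $\ll I_{r,s}I_{r,t}$ is false in general. The ``autocorrelation is maximal at $\vec 0$'' fact only gives $A_s(\vec c)\le A_s(\vec 0)=I_{r,s}$ for every shift $\vec c$, hence
\[
\sum_{\vec c}A_s(\vec c)A_t(\vec c)\le I_{r,s}\sum_{\vec c}A_t(\vec c)=I_{r,s}(p-1)^{2t},
\]
which loses a factor of about $p^{2t}/I_{r,t}$; there is no diagonal to extract, because the off-diagonal shifts genuinely carry most of the mass. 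Already for $r=1$ and $f(X)=aX$ one has $A_s(c)\approx p^{2s-1}$ for \emph{every} $c$, so $\sum_cA_s(c)A_t(c)=I_{1,s+t}\approx p^{2s+2t-1}$ while $I_{1,s}I_{1,t}\approx p^{2s+2t-2}$: the discrepancy is precisely the factor $p^{r}$ you hoped to avoid. Consequently your argument proves at best $|S_p(f)|^{2st}\ll p^{2st-2s-2t+2r}I_{r,s}I_{r,t}$, and with that extra $p^{r}$ the induction step of the paper (which takes $s=2$ and needs $S_p(f)^{4t}\ll p^{4t-3}I_{r,2}$) becomes vacuous. A smaller issue: the reduction via $|S_p(f)|\le p$ needs $s,t\ge2$, and the boundary case $t=1$ does not follow from $|S_p(f)|^{2s}\le p^{r}I_{r,s}$ alone, since $I_{r,1}=(p-1)\gcd(e_1,\dots,e_r,p-1)$ can be as small as $p-1$ while the lemma then demands $p^{r-2}I_{r,s}I_{r,1}$.

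The missing idea is a multiplicative averaging \emph{before} any moment computation: since $x\mapsto xz$ permutes $\F_p^*$, one has
\[
S_p(f)^t=\frac1{p-1}\sum_{z\in\F_p^*}\ \sum_{x_1,\dots,x_t\in\F_p^*}\ep\Bigl(\sum_{i=1}^ra_iz^{e_i}\bigl(x_1^{e_i}+\cdots+x_t^{e_i}\bigr)\Bigr),
\]
so that $|S_p(f)|^{t}\ll p^{-1}\sum_{\vec\lambda}J_{r,t}(\vec\lambda)\,\bigl|\sum_{z}\ep(\lambda_1z^{e_1}+\cdots+\lambda_rz^{e_r})\bigr|$, where $J_{r,t}$ counts the level sets of $(x_1,\dots,x_t)\mapsto(a_i\sum_jx_j^{e_i})_{i}$. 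A three-factor H\"older inequality (splitting $J=J^{1-1/s}(J^2)^{1/(2s)}$) then uses $\sum_{\vec\lambda}J=(p-1)^t$, $\sum_{\vec\lambda}J^2=I_{r,t}$, and, by orthogonality, $\sum_{\vec\lambda}\bigl|\sum_z\ep(\lambda_1z^{e_1}+\cdots+\lambda_rz^{e_r})\bigr|^{2s}=p^{r}I_{r,s}$, producing exactly one factor of $p^{r}$ and the exponent $2st-2s-2t+r$. It is this dualisation through the dilation variable $z$, absent from your argument, that decouples $I_{r,s}$ from $I_{r,t}$ without loss.
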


\begin{proof}
For any integer $t\geq 1$, we write
\begin{align*}
S_p(f)^{t} & = \sum_{x_1, \ldots, x_t\in  \F_p} \ep\(f\(x_1\) + \ldots + f\(x_t\)\)\\
&  = \frac{1}{p-1} \sum_{z   \in \F_p^*}
 \sum_{x_1, \ldots, x_t\in  \F_p}\ep\(f\(x_1 z \) + \ldots + f\(x_tz\)\)\\
& =   \frac{1}{p-1} 
 \sum_{x_1, \ldots, x_t\in  \F_p} \sum_{z   \in \F_p^*}   \e_p\(\sum_{i=1}^r a_i z^{e_i}\(x_1^{e_i}  + \cdots + x_{t}^{e_i} \)\) .
\end{align*}

Denoting by  $J_{r,t}(\lambda_1 ,\cdots, \lambda_{r})$ 
     the number of solutions to  the system of equations
\[
 a_i\(x_1^{e_i} + \cdots + x_t^{e_i}\) = \lambda_i, \qquad i = 1, \ldots, r, 
\]
in $x_1, \ldots, x_t \in \F_p^*$. 
we derive 
\begin{align*}
|S_p(f)|^{t}\ &\ll p^{-1}   \sum_{\lambda_1, \ldots, \lambda_r  \in \F_p} 
J_{r,t}(\lambda_1 ,\cdots, \lambda_{r}) \left| \sum_{z \in \F_p^{*}}
        \e_p\(\lambda_1 z^{e_1} +\cdots + \lambda_r z^{e_r} \) \right| .
  \end{align*}
Since $a_1, \ldots, a_r \ne 0$, we have 
 \begin{align*}
& \sum_{\lambda_1, \ldots, \lambda_r \in \F_p}   J_{r,t}(\lambda_1,  \ldots, \lambda_r) = (p-1)^t, \\
& \sum_{\lambda_1, \ldots, \lambda_r \in \F_p}   J_{r,t}(\lambda_1,  \ldots, \lambda_r)^2 = I_{r,t}. 
\end{align*}

Next, writing
\[
 J_{r,t}(\lambda_1,  \ldots, \lambda_r) =   J_{r,t}(\lambda_1,  \ldots, \lambda_r)^{(s-1)/s}  \(J_{r,t}(\lambda_1,  \ldots, \lambda_r)^2\)^{1/(2s)}
 \]
 and 
applying the H{\"o}lder inequality,  we derive
\begin{align*}
\left|S_p(f) \right|^{2st}&\ll p^{-2s}  \( \sum_{\lambda_1, \ldots, \lambda_r  \in \F_p} 
J_{r,t}(\lambda_1 ,\cdots, \lambda_{r})\)^{2s-2} \\
& \qquad   \ \qquad   \qquad  \sum_{\lambda_1, \ldots, \lambda_r \in \F_p}   J_{r,t}(\lambda_1,  \ldots, \lambda_r)^2 \\
& \qquad   \qquad  \qquad   \qquad   \sum_{\lambda_1, \ldots, \lambda_r \in \F_p}    \left| \sum_{z \in \F_p^{*}}
        \e_p\(\lambda_1 z^{e_1} +\cdots + \lambda_r z^{e_r} \) \right|^{2s}\\
        & \ll p^{2st-2s-2t}  I_{r,t}   \sum_{\lambda_1, \ldots, \lambda_r \in \F_p}    \left| \sum_{z \in \F_p^{*}}
        \e_p\(\lambda_1 z^{e_1} +\cdots + \lambda_r z^{e_r} \) \right|^{2s}. 
  \end{align*}
Since by the orthogonality of characters we have 
\[
   \sum_{\lambda_1, \ldots, \lambda_r \in \F_p}    \left| \sum_{z \in \F_p^{*}}
        \e_p\(\lambda_1 z^{e_1} +\cdots + \lambda_r z^{e_r} \) \right|^{2s}
        = p^{r}  I_{r,s}, 
\]
we conclude the proof.
\end{proof}

To apply Lemma~\ref{lem:S and I}  inductively, we use the trivial inequality $ I_{r,t}\le  I_{r-1,t}$, 
where $I_{\nu,t}$ is defined as $I_{r,t}$ by with respect to only the first $\nu\le r$ equations
in~\eqref{eq: Syst rt}. Hence, using the orthogonality of exponential functions, we write 
 \begin{equation}\label{eqn:r-1}
 \begin{split} 
I_{r-1,t}& = \frac{1}{p^{r-1}}  
              \sum_{\lambda_1, \ldots, \lambda_{r-1}\in \F_p}
              \left|\sum_{y\in  \mathbb{F}_p^{*}}  \e_p({\lambda_1 y^{e_1} + \cdots + \lambda_{r-1}y^{e_{r-1}}}) 
             \right | ^{2t} \\
& = p^{2t-r+1} +
              O\(\Delta\), 
\end{split} 
\end{equation}
with 
 \[
 \Delta =  \frac{1}{p^{r-1}} \sum_{\substack{\lambda_1, \ldots, \lambda_{r-1}\in \F_p\\
 (\lambda_1, \ldots, \lambda_{r-1}) \ne \mathbf 0 }}
\left|\sum_{y\in  \mathbb{F}_p^{*}}  \e_p({\lambda_1 y^{e_1} + \cdots + \lambda_{r-1}y^{e_{r-1}}}) 
\right | ^{2t}, 
\]
where $\mathbf 0$ is a zero vector in $\F_p^{r-1}$.  
Thus we estimate $ I_{r,t}$ via exponential sums with $(r-1)$-sparse polynomials, which enable inductive approach. 
Furthermore, for $t=2$ we use the following bound which is a weakened form or a result of 
Cochrane and Pinner~\cite[Theorem~7.1]{CoPin2}.

\begin{lem}\label{lem:I22}
For any  $\varepsilon>0$  we have
\[
I_{2,2}\ll p^{3-4\varepsilon},
\]
provided that  
\begin{align*}
&\gcd(e_1,p-1), \, \gcd(e_2,p-1)\le  0.5 p^{7/26 +14\varepsilon/13}, \\
&  \gcd(e_1 -e_2,p-1) \le  p^{8/13 +32\varepsilon/13}, \\
& \gcd(e_1,e_2, p-1) \le  p^{3/26-46\varepsilon/13}. 
\end{align*}  
\end{lem}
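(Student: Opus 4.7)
The lemma is explicitly described as a weakened form of \cite[Theorem~7.1]{CoPin2}, so the overall plan is to appeal directly to that theorem and then check that, under our hypotheses, its bound collapses to the uniform estimate $p^{3-4\varepsilon}$. Cochrane--Pinner's theorem produces an upper bound for $I_{2,2}$ as a sum of a bounded number of monomials in $p$ and in the four greatest common divisors $\gcd(e_1, p-1)$, $\gcd(e_2, p-1)$, $\gcd(e_1-e_2, p-1)$, $\gcd(e_1, e_2, p-1)$, and the hypotheses are tuned so as to control each of these monomials simultaneously.

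As a self-contained alternative route (and as a way of unpacking what needs to be bounded), I would open $I_{2,2}$ by orthogonality as the fourth moment
\[
I_{2,2} = \frac{1}{p^2} \sum_{\lambda_1, \lambda_2 \in \F_p}\left|\sum_{x \in \F_p^*}\ep(\lambda_1 x^{e_1} + \lambda_2 x^{e_2})\right|^4,
\]
and estimate the inner sum by the Cochrane--Pinner bound \eqref{eq:CoPin} in the generic case $\lambda_1 \lambda_2 \neq 0$, by the monomial bound \eqref{eq:Shkr} (with $d$ replaced by the relevant $\gcd(e_i, p-1)$) when exactly one of $\lambda_1, \lambda_2$ vanishes, and trivially for the pair $\lambda_1 = \lambda_2 = 0$, which contributes only $O(p^2)$. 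Combining these pointwise estimates with the second-moment identity $\sum_{\lambda_1, \lambda_2}|S(\lambda_1,\lambda_2)|^2 \ll p^3 \gcd(e_1, e_2, p-1)$ (which is straightforward by orthogonality) via the convexity inequality $\sum |S|^4 \le (\max|S|^2)\sum|S|^2$ reproduces a Cochrane--Pinner style upper bound of the expected shape.

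The main step is then purely arithmetic: substitute the hypothesized gcd bounds into each of the resulting summands and verify, term by term, that every exponent of $p$ is at most $3 - 4\varepsilon$. The exponents $7/26 + 14\varepsilon/13$, $8/13 + 32\varepsilon/13$ and $3/26 - 46\varepsilon/13$ that appear in the hypotheses are calibrated exactly so that the leading terms of the bound each become $p^{3-4\varepsilon}$; in particular the \emph{negative} $\varepsilon$-coefficient $-46\varepsilon/13$ on $\gcd(e_1, e_2, p-1)$ is present precisely to compensate the positive $\varepsilon$-coefficients elsewhere, since this gcd always enters with a positive power, and the explicit constant $0.5$ on the bounds for $\gcd(e_i, p-1)$ absorbs the numerical constants in the terms that are quadratic in those gcds. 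I expect this term-by-term linear-exponent bookkeeping to be the most delicate point of the argument, but it is conceptually routine once the Cochrane--Pinner structure is in place.
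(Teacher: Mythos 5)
Your first route --- quoting \cite[Theorem~7.1]{CoPin2} directly and checking that under the stated gcd hypotheses its bound collapses to $p^{3-4\varepsilon}$ --- is exactly what the paper does. Concretely, with $d=\gcd(e_1,e_2,p-1)$ that theorem gives $I_{2,2}\ll d^{26/23}p^{66/23}$ provided $\gcd(e_1,p-1),\,\gcd(e_1-e_2,p-1)\le (p/d)^{16/23}$ and $\gcd(e_2,p-1)\le 0.5(p/d)^{7/23}$; for $d\le p^{3/26-46\varepsilon/13}$ one checks $(p/d)^{16/23}\ge p^{8/13+32\varepsilon/13}$, $(p/d)^{7/23}\ge p^{7/26+14\varepsilon/13}$ and $d^{26/23}p^{66/23}\le p^{3/23-4\varepsilon+66/23}=p^{3-4\varepsilon}$. (Note the constant $0.5$ is inherited from a \emph{hypothesis} of that theorem, not used to absorb numerical constants in the bound.)

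However, your ``self-contained alternative route'' has a genuine quantitative gap and cannot substitute for that appeal. The convexity step $\sum|S|^4\le(\max|S|^2)\sum|S|^2$, combined with the second moment $\sum|S|^2\ll p^3g$ (where $g=\gcd(e_1,e_2,p-1)$) and the sup-norm bound~\eqref{eq:CoPin} $\max|S|\ll\gcd(e_1-e_2,p-1)+g^{13/46}p^{89/92}$ over the nondegenerate pairs, yields for the main term
\[
I_{2,2}\ \ll\ p\,g\cdot g^{13/23}p^{89/46}\ =\ g^{36/23}p^{135/46},
\]
which is weaker than $d^{26/23}p^{132/46}$ by a factor $g^{10/23}p^{3/46}$. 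This term is at most $p^{3-4\varepsilon}$ only for $g\le p^{1/24-23\varepsilon/9}$, whereas the lemma allows $g$ up to $p^{3/26-46\varepsilon/13}$, and $3/26>1/24$; at the extreme $g=p^{3/26-46\varepsilon/13}$ one would need $\varepsilon\ge 3/40$, which is incompatible with the requirement $\varepsilon\le 3/92$ forced by non-vacuity of the third hypothesis. So the ``unpacking'' does not reproduce a bound of the required strength: \cite[Theorem~7.1]{CoPin2} is a direct estimate on the fourth moment that is genuinely sharper than what one extracts from the pointwise bound~\eqref{eq:CoPin} plus the second moment, and the exponents $7/26+14\varepsilon/13$, $8/13+32\varepsilon/13$, $3/26-46\varepsilon/13$ are calibrated to the former. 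You must therefore carry out the proof via the first route; the term-by-term bookkeeping in your final paragraph is correct only when applied to the summand $d^{26/23}p^{66/23}$ and the hypotheses of Theorem~7.1, not to the summands produced by your alternative decomposition.
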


\begin{proof} Let $d =  \gcd(e_1,e_2, p-1)$. Assume that $p$ is sufficiently large. 
By~\cite[Theorem~7.1]{CoPin2}  we have the bound 
 \begin{equation}\label{eq: I22 CP-bound}
 I_{2,2}\ll  d^{26/23} p^{66/23},
 \end{equation}
provided that (with slightly relaxed constants) 
 \begin{equation}\label{eq: I22 CP-cond}
\begin{split}
& \gcd(e_1,p-1),  \gcd(e_1 -e_2,p-1) \le   (p/d)^{16/23}, \\
&  \gcd(e_2,p-1)\le  0.5 (p/d)^{7/23} .
\end{split} 
 \end{equation}
 We now easily verify that for $d \le p^{3/26-46\varepsilon/13}$ the conditions we impose on $e_1$ and $e_2$ are stricter than 
 required in~\eqref{eq: I22 CP-cond} (for example, we could only request $\gcd(e_1,p-1) \le p^{8/13 +32\varepsilon/13}$),  
 while the desired bound is weaker than~\eqref{eq: I22 CP-bound}.  
\end{proof} 
Using  the same argument as in the proof of Lemma~\ref{lem:I22}, while taking $d=1$, 
   we also see that~\cite[Theorem~7.1]{CoPin2} leads to  the following 
simpler version of Lemma~\ref{lem:I22}.

\begin{lem}\label{lem:newI22}
We have
\[I_{2,2}\ll p^{3-3/23},
\]
provided that  
\begin{align*}
&\gcd(e_1,p-1)=\gcd(e_2,p-1)=1, \\
&  \gcd(e_1 -e_2,p-1) \le  p^{16/23}.
\end{align*}  
\end{lem}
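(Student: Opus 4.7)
The plan is to specialise the argument of Lemma~\ref{lem:I22} to the case $d = \gcd(e_1, e_2, p-1) = 1$, which is automatic here since by hypothesis $\gcd(e_1, p-1) = 1$ forces $d = 1$. With this value of $d$, the hypotheses~\eqref{eq: I22 CP-cond} required by~\cite[Theorem~7.1]{CoPin2} collapse to
\[
\gcd(e_1,p-1), \ \gcd(e_1 - e_2, p-1) \le p^{16/23}, \qquad \gcd(e_2, p-1) \le 0.5\, p^{7/23}.
\]
Two of these hold trivially from $\gcd(e_1, p-1) = \gcd(e_2, p-1) = 1$ (assuming $p$ is sufficiently large, as in Lemma~\ref{lem:I22}), while the third is precisely the assumption imposed on $\gcd(e_1 - e_2, p-1)$.

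First I would verify this reduction of hypotheses explicitly, then invoke~\eqref{eq: I22 CP-bound} with $d = 1$. This yields
\[
I_{2,2} \ll 1^{26/23}\, p^{66/23} = p^{66/23}.
\]
A direct arithmetic check gives $66/23 = 3 - 3/23$, which matches the target exponent.

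There is no substantive obstacle here; the whole point of the lemma is to record the cleaner bound that emerges when $d = 1$ (so that the $d^{26/23}$ factor in~\eqref{eq: I22 CP-bound} disappears) and when the coprimality assumption lets one relax the condition on $\gcd(e_1 - e_2, p-1)$ to the uniform threshold $p^{16/23}$. Consequently, the proof should amount to no more than a single sentence referring to the proof of Lemma~\ref{lem:I22} with $d = 1$, followed by the elementary identity $66/23 = 3 - 3/23$.
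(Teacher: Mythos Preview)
Your proposal is correct and matches the paper's approach exactly: the paper simply remarks that the same argument as in Lemma~\ref{lem:I22} with $d=1$ gives the result, and your write-up spells out precisely this specialisation. The only content is checking that~\eqref{eq: I22 CP-cond} is satisfied when $d=1$ under the stated hypotheses and reading off $I_{2,2}\ll p^{66/23}=p^{3-3/23}$ from~\eqref{eq: I22 CP-bound}, which you do correctly.
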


\subsection{Induction basis}  
 First we observe that the bounds~\eqref{eq:Shkr} and~\eqref{eq:CoPin} ensure that 
 under the condition of  Theorems~\ref{thm:anyrbetter} and~\ref{thm:anyrbetternew}  the desired results hold for $r=1$ and $r=2$. 
 
  Indeed, for $r=1$, under the assumption of $e_1$ in  Theorem~\ref{thm:anyrbetter}
 the bound~\eqref{eq:Shkr} implies 
 \[
 S_p(f) \ll d^{1/2} p^{2/3} (\log p)^{1/6} 
 \ll   \(p^{7/26 +14\varepsilon/13}\)^{1/2}  p^{2/3} (\log p)^{1/6}  \ll p^{1-\varepsilon}, 
\]
since for $\varepsilon < 31/240$ we have
\[
\frac{1}{2} \(7/26 +14\varepsilon/13\) + 2/3 < 1-\varepsilon. 
\]

Furthermore,  under the assumption of $e_1$ in  Theorem~\ref{thm:anyrbetternew} 
we obviously have $ S_p(f)  = 0$. 
Hence,  we can take $\kappa_1(\varepsilon) = \varepsilon$. 

Next, for $r=2$, under the assumption of $e_1$ and $e_2$  in  Theorem~\ref{thm:anyrbetter}
the bound~\eqref{eq:CoPin} implies 
\[
 S_p(f) \ll  p^{8/13 +32\varepsilon/13}  +   \(p^{3/26-46\varepsilon/13}\)^{13/46} p^{89/92}
  \ll p^{1-\varepsilon}
\]
since for $\varepsilon \le 1/9$ we have
\[
\max\left\{8/13 +32\varepsilon/13, \frac{13}{46}  \(3/26-46\varepsilon/13 \) + 89/92\right\} \le 1-\varepsilon. 
\]
It remains to observe that $\min\{31/240, 1 /9\} > 3/92$. 

Finally, under the  the assumption of $e_1$ and $e_2$  in  Theorem~\ref{thm:anyrbetternew}, 
by~\eqref{eq:CoPin}, we have 
\[
 S_p(f) \ll  p^{16/23} +    p^{89/92} \le  p^{89/92} 
=p^{1-3/92}. 
\]
Hence, recalling the definition~\eqref{eq:kappa2}, we see that 
this establishes  the basis of induction for  Theorems~\ref{thm:anyrbetter} and~\ref{thm:anyrbetternew}.

 \subsection{Induction step}\label{sec:induction}
 We now assume that $r \ge 3$ and that the desired result holds for all $k$-sparse polynomials 
 with $k < r$ and establish it for $r$-sparse polynomials.

 Let $\Delta$ be as in~\eqref{eqn:r-1}. 
 By the inductive assumption, we have 
 \begin{align*}
  \Delta & =   \sum_{\substack{\lambda_1, \ldots, \lambda_{r-1}\in \F_p\\
 (\lambda_1, \ldots, \lambda_{r-1}) \ne \mathbf 0}}
\left|\sum_{y\in  \mathbb{F}_p^{*}}  \e_p({\lambda_1 y^{e_1} + \cdots + \lambda_{r-1}y^{e_{r-1}}}) 
\right | ^{2t} \\
& \le  p^{2(t-2)(1- \kappa_{r-1}(\varepsilon))}   \sum_{\substack{\lambda_1, \ldots, \lambda_{r-1}\in \F_p\\
 (\lambda_1, \ldots, \lambda_{r-1}) \ne \mathbf 0}}
\left|\sum_{y\in  \mathbb{F}_p^{*}}  \e_p({\lambda_1 y^{e_1} + \cdots + \lambda_{r-1}y^{e_{r-1}}}) 
\right | ^4\\
& \le  p^{2(t-2)(1- \kappa_{r-1}(\varepsilon))} I_{r-1,2} \le    p^{2(t-2)(1- \kappa_{r-1}(\varepsilon))} I_{2,2}. 
\end{align*}

Hence, by  Lemma~\ref{lem:I22} we see that 
\[
  \Delta \ll   p^{2(t-2)(1- \kappa_{r-1}(\varepsilon)) + 3 - \varepsilon} 
  = p^{2t - 1-  2(t-2)\kappa_{r-1}(\varepsilon)) - \varepsilon}. 
\] 
Then we see from~\eqref{eqn:r-1} that 
 \[
 I_{r-1,t} \ll p^{2t-r+1} +  p^{2t - 1-  2(t-2)\kappa_{r-1}(\varepsilon)) - \varepsilon} 
 \ll  p^{2t-r+1} 
 \]
 provided 
 \begin{equation}\label{eq:t kappa r}
2(t-2)\kappa_{r-1}(\varepsilon) \ge r-2- \varepsilon.
\end{equation}

Hence, under  the condition~\eqref{eq:t kappa r}, by Lemma~\ref{lem:S and I}, taken with $s= 2$, 
we have 
\[
S_p(f)^{4t} \ll p^{4t - 3} \, I_{r-1,2} \leq p^{4t - 3} \, I_{2,2}.
\]
Since  under the conditions of Theorem~\ref{thm:anyrbetter},  the conditions of Lemma~\ref{lem:I22} are 
also satisfied, 
we derive 
\[
S_p(f)^{4t} \ll p^{4t - 4 \varepsilon}.
\]
Taking 
 \[
 t = t_r(\varepsilon), 
 \]
 where $t_r(\varepsilon)$ is given by~\eqref{eq:tr-eps}, 
 to ensure that~\eqref{eq:t kappa r} holds, we derive $S_p(f)  \ll p^{1 -   \varepsilon/t_r(\varepsilon)} = p^{1 -   \kappa_r(\varepsilon)}$, where $ \kappa_r(\varepsilon)$ is as in~\eqref{eq:kappar}.  
 
 Finally, we see that under the conditions of  Theorem~\ref{thm:anyrbetternew} we can apply Lemma~\ref{lem:newI22}
 and check that the above bounds hold with $\varepsilon = 3/92$. 

\section{Proof of Theorem~\ref{thm:coprimeq}}

\subsection{Exponential sums modulo prime powers}

We first recall the following bound on the number of zeros of sparse polynomials in $\F_p$, 
which follows instantly from~\cite[Lemma~7]{CFKLLS}. 

\begin{lem}
\label{lem:SprEq Zeros} Let $p$ be any prime and let  $f\in \F_p[X]$ be be as  in~\eqref{eq: Sparse Poly} and~\eqref{eq: GCD} with $q=p$. 
Let 
\[
D =  \max_{1\le i \ne j \le r} \gcd(e_i - e_j,p-1). 
\]
Then, the equation
\[
f(X) = 0, \qquad x \in \F_p,
\]
has at most $O\(p^{1 - 1/r} D^{1/r}\)$  solutions. 
\end{lem}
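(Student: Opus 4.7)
The plan is to realise the bound as a direct consequence of~\cite[Lemma~7]{CFKLLS}, which already estimates the number of $\F_p$-zeros of an $r$-sparse polynomial in terms of the maximal pairwise greatest common divisor of exponent differences with $p-1$; the only task left is a cosmetic reduction and a check that the hypotheses match.

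First I would set aside the root $X = 0$, which contributes at most one solution and is absorbed into the $O(\cdot)$ bound. For nonzero $X$ I would factor out $X^{e_1}$ and work with $\tilde f(X) = \sum_{i=1}^{r} a_i X^{e_i - e_1}$, which has the same nonzero zero set as $f$. The exponents are now rebased to $0 = e_1 - e_1 < e_2 - e_1 < \cdots < e_r - e_1$, and the pairwise differences $e_i - e_j$ are preserved, so the controlling quantity $D$ is unchanged. Should some of the coefficients $a_i$ vanish modulo $p$, the coprimality assumption $\gcd(a_1,\ldots,a_r,p) = 1$ still ensures $\tilde f \not\equiv 0 \pmod p$; it just has $\tilde r \le r$ surviving monomials. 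Since $D \le p-1 < p$, the map $s \mapsto p^{1 - 1/s} D^{1/s}$ is nondecreasing in $s$, so passing from $\tilde r$ back up to $r$ only weakens the bound, giving the claimed estimate.

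The only step requiring genuine care is verifying that the implied constant in~\cite[Lemma~7]{CFKLLS} is absolute or depends at most on $r$, in line with the $O(\cdot)$-convention fixed in the set-up of this paper. I do not expect any real obstacle: the exponent manipulations above are purely formal, and the whole argument collapses to a single invocation of the cited lemma, exactly as the ``follows instantly'' attribution suggests.
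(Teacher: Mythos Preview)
Your proposal is correct and matches the paper's approach: the paper likewise attributes the lemma directly to~\cite[Lemma~7]{CFKLLS} with no further argument. Your cosmetic reductions (discarding $X=0$, rebasing exponents, handling vanishing coefficients, and the monotonicity of $p^{1-1/s}D^{1/s}$ in $s$) are harmless and simply make the citation airtight.
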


We now estimate 
\begin{lem}\label{lem:pm}
Let $p$ be any prime and let  $m\geq 2$ be any integer. Assume  that $f$ is as in~\eqref{eq: Sparse Poly} satisfying~\eqref{eq: GCD} with $q = p$. 
Assume  that for all  $1 \le i \le r$ we have 
\begin{equation}\label{eq: cond pm}
\gcd(e_i, p^m) \le p^{m/3}. \end{equation}
Then, we have   
\[
S_{p^m}(f) \ll  p^{m-1/r} D^{1/r}
\]
where $D$ is as in Lemma~\ref{lem:SprEq Zeros}. 
\end{lem}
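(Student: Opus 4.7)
The plan is to apply a $p$-adic Taylor expansion (a Hensel-style lifting) to reduce the exponential sum modulo $p^m$ to a counting problem in $\F_p$, and then invoke Lemma~\ref{lem:SprEq Zeros}. For clarity I assume $p$ is odd and, after a harmless normalisation, that $p \nmid a_i$ for each $i$; the case $p=2$ requires only mild adjustments to absorb the extra $2$-adic valuation of $k!$. Write each $x \in \Z_{p^m}^*$ uniquely as $x = y + p^h z$ with $y \in \{0,\ldots,p^h-1\}$ coprime to $p$ and $z \in \{0,\ldots,p^{m-h}-1\}$, where $h$ will be chosen so that both the Taylor truncation and the subsequent orthogonality manipulation work out.

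Set $v = \min_i v_p(e_i)$, so that $v \leq m/3$ by~\eqref{eq: cond pm}. The identity $k\binom{e_i}{k} = e_i \binom{e_i-1}{k-1}$ gives $v_p\bigl(\binom{e_i}{k}\bigr) \geq v_p(e_i) - v_p(k) \geq v - v_p(k)$. Choosing $h = m - v - 1$, the easy inequality $v_p(k) \leq k - 2$ (valid for $p \geq 3$ and $k \geq 2$) implies $(k-1)(m-v) \geq k + v_p(k)$ for every $k \geq 2$, which is exactly the condition that each term of order $k \geq 2$ in the binomial expansion
\[
f(y + p^h z) = \sum_{k \geq 0} p^{kh} z^k \sum_i a_i \binom{e_i}{k} y^{e_i - k}
\]
vanishes modulo $p^m$. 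Hence $f(y+p^h z) \equiv f(y) + p^h z f'(y) \pmod{p^m}$, and summing over $z$ using orthogonality gives
\[
S_{p^m}(f) = p^{v+1} \sum_{\substack{y \in \Z_{p^h}^* \\ p^{v+1}\,\mid\, f'(y)}} \e_{p^m}(f(y)),
\]
so that $|S_{p^m}(f)| \leq p^{v+1} N$, where $N = \#\bigl\{y \in \Z_{p^h}^* : p^{v+1}\mid f'(y)\bigr\}$.

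To bound $N$, write $f'(y) = p^v g(y)$ with $g(y) = \sum_i a_i (e_i/p^v) y^{e_i - 1}$; the divisibility condition then becomes simply $p \mid g(y)$. The reduction $g \bmod p$ retains precisely those terms with $v_p(e_i) = v$, so it is a sparse polynomial with $1 \leq s \leq r$ nonzero terms, at least one coefficient coprime to $p$, and exponent differences whose gcd with $p-1$ is at most $D$. Lemma~\ref{lem:SprEq Zeros} bounds its zero count in $\F_p^*$ by $O\bigl(p^{1 - 1/s} D^{1/s}\bigr)$, and since $D \leq p-1 < p$ the function $s \mapsto p(D/p)^{1/s}$ is increasing, so this is in turn $\leq O\bigl(p^{1-1/r} D^{1/r}\bigr)$. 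Because $p \mid g(y)$ depends only on $y \bmod p$, we have $N = p^{h-1} \cdot \#\bigl\{y_0 \in \F_p^* : g(y_0) \equiv 0 \pmod{p}\bigr\}$, and therefore
\[
|S_{p^m}(f)| \leq p^{v+1} \cdot p^{h-1} \cdot O\bigl(p^{1-1/r} D^{1/r}\bigr) = O\bigl(p^{m - 1/r} D^{1/r}\bigr).
\]

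The main technical point is to balance two competing demands on $h$: it must be large enough for the Taylor expansion to terminate at the linear term modulo $p^m$, yet small enough that after removing the common $p^v$ from $f'$ the condition $p^{m-h}\mid f'(y)$ collapses to the single mod-$p$ congruence $p\mid g(y)$. The hypothesis $v \leq m/3$ is precisely what allows the choice $h = m - v - 1$ to satisfy both requirements for every $m \geq 2$; the $p = 2$ case requires a slightly larger $h$, or separate handling of small $m$, to compensate for the positive contribution of $v_2(k!)$.
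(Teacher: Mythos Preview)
Your argument is correct and follows the same route as the paper: a $p$-adic Taylor expansion collapses $S_{p^m}(f)$ to a count of residues where $f'$ vanishes modulo $p$, and Lemma~\ref{lem:SprEq Zeros} finishes. The paper streamlines matters by taking the splitting height $n = \lceil m/2 \rceil$ (so $2n \ge m$ kills all higher-order terms without any binomial-coefficient bookkeeping) and by working directly with $u = \min_i v_p(a_i e_i)$, which sidesteps your unjustified ``harmless normalisation'' $p \nmid a_i$ --- though your argument goes through verbatim with $u$ in place of $v$.
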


\begin{proof} We start as in the proof of~\cite[Proposition~8]{Bourg2}. Let us first set,
\[
n = \rf{m/2}.
\]
Writing
\[
S_{p^m}(f)  = 
\frac{1}{p^n} \sum_{x\in \Z_{p^m}^*} \sum_{1 \leq y \leq p^n}\e_{p^m}(f(x + p^n y)),
\]
and noting that
\[
f(x + p^n y) = f(x) + p^n f'(x)y \pmod{p^m},
\]
we have
\begin{align*}
    |S_{p^m}(f)|&\leq \frac{1}{p^n}\sum_{x\in \Z_{p^m}^*} \left| \sum_{1 \leq y < p^n} \e_{p^{m-n}}(f'(x)y) \right|  \\
    &= \#\left\{x\in \Z_{p^m}^*:~  f'(x) \equiv 0 \pmod{p^{m-n}} \right\} .
\end{align*}
We now write 
\[
f'(x) = \sum_{i=1}^r a_i e_i x^{e_i - 1} = p^u \sum_{i=1}^r b_i x^{e_i - 1},
\]
with some integers $u$ and $b_1, \ldots, b_r$ satisfying $\gcd(b_1, \ldots, b_r,p)=1$.
Since $m \ge 2$, the condition~\eqref{eq: cond pm} implies that $u \le m/3 < m-n$. In particular,
\[
f'(x)\equiv 0 \pmod {p^{m-n}}
\]
implies 
\[
\sum_{i=1}^r b_i x^{e_i - 1}\equiv 0 \pmod {p}.
\]
At this point our argument deviates from that in the proof of~\cite[Proposition~8]{Bourg2} and instead of using 
bounds of exponential sums, we appeal to the bound of Lemma~\ref{lem:SprEq Zeros}, concluding the proof. 
\end{proof} 

\subsection{Concluding the proof}
If 
\[
q = \prod_{p \mid q} p^{m_p}
\] 
is the prime factorisation of $q$, then one can write
\begin{equation}\label{eqn:decomp}
    S_q(f) = \prod_{p\mid q}  S_{p^{m_p}}(\lambda_p f), 
\end{equation}
with some integers $\lambda_p$ such that $\gcd(\lambda_p,p) =1$, see, for example,~\cite[Equation~(12.21)]{IwKow}.

We consider the following system of three independent conditions:
\begin{subequations}\begin{align}
    &\gcd(e_j, p - 1) \le  p^{1/4}, \qquad  1 \le u \le r, \label{eq: cond 2} \\
    &\gcd(e_i - e_j, p - 1) \le p^{3/5}, \qquad  1 \le i\neq j \le r, \label{eq: cond 3}\\
    & \gcd(e_i,e_j, p-1) \le  p^{3/52}, \qquad  1 \le i\neq j \le r. \label{eq: cond 4}
\end{align}
\end{subequations}

Note that the condition~\eqref{eq: cond 3} implies that we have $D \le  p^{3/5}$ in 
Lemmas~\ref{lem:SprEq Zeros} and~\ref{lem:pm}.

Now, we also set 
\[
E = \prod_{1 \le i < j \le r} (e_i - e_j)
\]
and consider the sets of primes
\begin{align*}
   & \cP_1  = \left\{ p:~ \gcd\(p^{m_p}, e_1\ldots e_r \) > p^{m_p / 3} \right \}, \\ 
    &\cP_2  = \left\{ p:~ m_p > 20r^2 \text{ and } 
     E \equiv 0 \pmod{p^{\lfloor m_p / (10r) \rfloor}} 
    \right\}, \\[1em]
    &\cP_3  = \bigl\{ p \not\in \cP_1:~ 
    m_p \le 20r^2 \text{ and } 
    p \text{ does not satisfy} \\
    & \qquad \qquad \qquad \qquad \qquad \qquad \qquad   \text{ one of } \eqref{eq: cond 2}, \eqref{eq: cond 3}, \eqref{eq: cond 4}
    \bigr\}.
\end{align*}
Let us define 
\[
q_i=\prod_{p\in \cP_i} p^{m_p}, \qquad i =1,2,3,
\] 
and follow the arguments as in the proof of in~\cite[Proposition~11]{Bourg2}. 

Note that 
\[
|E| \le d^{r(r-1)/2} < d^{r^2/2}.
\]
Hence for $q_1$ and $q_2$ we have the estimates
\begin{equation}\label{eqn:q1q2}
q_1 < d^{3r} \mand q_2< d^{20 r^3},
\end{equation}
see~\cite[Equations~(3.11) and~(3.12)]{Bourg2}. 

For any $p\in \cP_3$, failing one of the conditions~\eqref{eq: cond 2}, \eqref{eq: cond 3} and~\eqref{eq: cond 4}
means
\[
p^{1/4}\le d, \quad \text{or} \quad    p^{3/5}\le d, \quad \text{or} \quad p^{3/52}\le d. 
\]
Hence, in any case we have $p\le d^{52/3}$. 
Therefore, we trivially  have 
\begin{equation}\label{eqn:q3 prod}
    q_3< \prod_{p \le d^{52/3}} p^{20r^2}.
\end{equation}

By the classical result of Rosser and  Schoenfeld~\cite[Theorem~9]{RoSch}, we see that~\eqref{eqn:q3 prod} implies 
\begin{equation}\label{eqn:q3}
    q_3<  \exp\(1.02 \cdot 20 r^2d ^{52/3}\) \le  \exp\(21 r^2d ^{52/3}\)
\end{equation}  

Next, taking $\cP=\cP_1\cup \cP_2\cup \cP_3$, it follows from~\eqref{eqn:decomp} that
\begin{equation}\label{eqn:afterq}
    S_q(f)\ll q_1q_2q_3\prod_{\substack{p\not\in \cP\\p\mid q}} \left( \sum_{x\in \Z_{p^{m_p}}^*} \e_{p^{m_p}}(\lambda_p f(x)) \right).
\end{equation}
We now follow the argument as in the proof of~\cite[Proposition~11]{Bourg2}, and decompose the product of the sums on the right above as the product of three sums according to, 
\[
m_p=1, \qquad 1<m_p \le 20 r^2, \qquad m_p>20 r^2, 
\] 
and denote these products  $S_1, S_2$, and $S_3$ respectively

To estimate $S_1$,   instead of the conditions of  Theorem~\ref{thm:anyrbetter} corresponding to  $\varepsilon = 3/184$ we impose more stringent 
conditions~\eqref{eq: cond 2}, \eqref{eq: cond 3} and~\eqref{eq: cond 4}, 
which guarantee that the bound of Theorem~\ref{thm:anyrbetter}  holds with 
$ \kappa_r(3/184) \ge  \sigma_r$, as defined in~\eqref{eq:sigma-r},  and we derive 
\begin{equation}\label{eqn:s1}
    S_1\ll Q_1^{1-\sigma_r+o(1)}, \quad \mathrm{where} \  Q_1=\prod_{\substack{p\not \in \cP,~p\mid q\\m_p =1}} p,
\end{equation}
where, here and below,  the term $Q^{o(1)}$ from the classical bound on the divisor function~\cite[Equation~(1.81)]{IwKow}.

Moreover, applying Lemma~\ref{lem:pm} for $m_p \ge 2$, 
Since for  $p\not \in \cP$ with $m_p \ge 2$, 
 the condition~\eqref{eq: cond 3} that we have $D \le p^{3/5}$. Hence
 \begin{equation}\label{eqn:s2}
    S_2\ll Q_2^{1-1/(50r^3)+o(1)}, \quad \mathrm{where} \ Q_2=\prod_{\substack{p\not \in \cP,~p\mid q\\ 2 \le m_p\le 20 r^2}} p^{m_p}.
\end{equation}

Finally to estimate $S_3$, since we run over $m_p>20r^2$ while $p\not\in \cP_2$, we  apply~\cite[Proposition~7]{Bourg2}, and get
\begin{equation}\label{eqn:s3}
    |S_3|\le Q_3^{1-1/(10r^2)+o(1)}, \quad \mathrm{where} \ Q_3=\prod_{\substack{p\not \in \cP,~p\mid q\\ m_p>20 r^2}} p^{m_p}.
\end{equation}

For any integer $r \ge 1$ we have 
\[
\sigma_r \le \frac{1}{50r^3}  . 
\] 
Combining~\eqref{eqn:afterq}, \eqref{eqn:s1},  \eqref{eqn:s2}, and~\eqref{eqn:s3}, we have
\[
S_q(f) \ll q_1q_2q_3 \(Q_1Q_2Q_3\)^{1-\sigma_r+o(1)} = q^{1-\sigma_r+o(1)} \(q_1q_2q_3\)^{\sigma_r}.
\]
Combining~\eqref{eqn:q1q2} and~\eqref{eqn:q3}, we see that 
\[
q_1q_2q_3  \le d^{3r+20 r^3} \exp\( 21 r^2 d ^{52/3}\)     \le 
 \exp\(r^3d^{18}\), 
\]
concluding the proof.

\section{Proof of Theorem~\ref{thm:discr PowGen}}

\subsection{Discrepancy   and exponential sums}
\label{sec:discrepancy}

Let  $D(\Gamma)$ denote the discrepancy 
of  a sequence $\Gamma=(\bgamma_n)_{n= 0}^N$ of $N$ points 
in an $s$-dimensional unit cube $[0,1]^s$ as defined in Section~\ref{sec:mulf distr}.

One of the basic tools used to study uniformity of
distribution is the celebrated
 Koksma--Sz\"usz inequality~\cite{Kok,Sz} 
(see also~\cite[Theorem~1.21]{DrTi}), which generalises the Erd\H{o}s--Tur\'an inequality to arbitrary dimensions. Like the Erd\H{o}s--Tur\'an inequality, the Koksma--Sz\"usz inequality gives a bound on the discrepancy via certain
exponential sums.

\begin{lem}\label{lem:K-S}
For any integer $A \ge 1$, we have
\[
D(\Gamma) \ll \frac{1}{A}
+\frac{1}{N}\sum_{\substack{\va \in \mathbb{Z}^s\\ 0<\|\va\|\le A}}\frac{1}{r(\va)}
\left| \sum_{n=1}^N \e{(\langle \va, \bgamma_n\rangle)}\right|,
\]
where $ \langle \va, \vb\rangle$ denotes the inner product in $\R^s$, 
\[
\|\va\|=  \max_{j=1, \ldots, s}|a_j|, \qquad
r(\va) =  \prod_{j=1}^s \(|a_j| + 1\), \qquad \e(z) =  \exp\(2\pi i z\). 
\]
\end{lem}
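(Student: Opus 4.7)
The plan is to follow the classical Erd\H{o}s--Tur\'an--Koksma approach: approximate the indicator function of an arbitrary box by trigonometric polynomials of bounded degree, and then expand in Fourier series to convert the discrepancy into the exponential sums that appear on the right-hand side.

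First, I would fix a box $\cB = [\alpha_1,\beta_1]\times\ldots\times[\alpha_s,\beta_s]\subseteq [0,1)^s$ and, for each coordinate $j$, invoke the one-dimensional Beurling--Selberg (equivalently Vaaler) construction to produce $1$-periodic functions $\phi_j^-\le \chi_{[\alpha_j,\beta_j]}\le \phi_j^+$ whose Fourier coefficients are supported in $\{-A,\ldots,A\}$, satisfy $|\widehat{\phi_j^\pm}(k)|\ll 1/(|k|+1)$, and have $\int_0^1\phi_j^\pm(x)\,dx = (\beta_j-\alpha_j)+O(1/A)$. A suitable combination of tensor products of these functions then produces trigonometric polynomials $f^\pm(\vx)$ on $[0,1]^s$ with $f^-\le \chi_\cB\le f^+$, Fourier support in $\{\va\in\Z^s:\|\va\|\le A\}$, zero-frequency coefficient $\vol\cB+O(1/A)$, and remaining coefficients bounded by $O(1/r(\va))$.

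Next, evaluating $f^\pm$ at the points of $\Gamma$ and summing sandwiches the count $N(\cB)$ between $\sum_{n=1}^N f^-(\bgamma_n)$ and $\sum_{n=1}^N f^+(\bgamma_n)$. Subtracting $N\vol\cB$, the zero-frequency contribution is $O(N/A)$, while the nonzero frequencies produce precisely the sum appearing in the statement. Dividing by $N$ and taking the supremum over all boxes $\cB$ yields the claimed inequality. The main obstacle is the Beurling--Selberg construction of one-sided trigonometric approximants with compactly supported Fourier transform and the required decay of the Fourier coefficients; this is where all the substantive work is concentrated. Since the inequality is classical, in practice one simply quotes~\cite{Kok,Sz} or~\cite[Theorem~1.21]{DrTi} rather than reproduce the construction here.
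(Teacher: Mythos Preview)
Your proposal is correct and matches the paper's treatment: the paper gives no proof at all of this lemma, simply citing it as the classical Koksma--Sz\"usz inequality from~\cite{Kok,Sz} (and~\cite[Theorem~1.21]{DrTi}). Your outline of the Erd\H{o}s--Tur\'an--Koksma argument via Beurling--Selberg/Vaaler approximants is the standard proof, and your closing remark that one simply quotes the references is exactly what the paper does.
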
   

 \subsection{Poisson summation} 
 
 We formulate  the following result, which is well known and with various modifications
 has been used in a large number of works. However,  since finding a precise 
 general reference 
 is not easy, we present it here with a full proof, which follows closely the argument 
 of Friedlander and    Iwaniec~\cite[Section~3]{FrIw}. 
 
Let, as before $\e(z) =  \exp\(2\pi i z\)$. 

 \begin{lem}\label{lem:Poisson} Let $x_n$, $n \in \Z$ be a both-sided infinite sequence of 
complex numbers, indexed by integers. Then for any  integers $N \ge K \ge 1$, there is a real $\alpha$, 
 such that 
\[
\sum_{n=1}^N  x_n \ll \frac{\log N}{K} 
\sum_{n=-N}^N \left| \sum_{k=1}^K x_{n+k} \e(\alpha k)\right|. 
\]
\end{lem}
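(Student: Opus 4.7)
The plan is a completing-the-sum maneuver built on the orthogonality relation $\int_0^1 \e(\alpha(k-\ell))\,d\alpha = \delta_{k,\ell}$ for $k,\ell\in\Z$. First I would exploit translation invariance to write, for each $k\in\{1,\ldots,K\}$, $\sum_{n=1}^N x_n = \sum_{m=1-k}^{N-k} x_{m+k}$, then sum over $k$ and swap the order of summation to obtain
\[
K \sum_{n=1}^N x_n = \sum_{m=1-K}^{N-1} \sum_{k \in I_m} x_{m+k},
\]
where $I_m = [\max(1,1-m),\min(K,N-m)]\cap\Z \subseteq \{1,\ldots,K\}$. Each inner sum is a partial sum of length at most $K$, so the task reduces to bounding such partial sums in terms of the complete sum $\phi_\alpha(m) := \sum_{k=1}^K x_{m+k} \e(\alpha k)$.

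Next I would insert the Fourier detector of $I_m$: by the orthogonality relation,
\[
\sum_{k \in I_m} x_{m+k} = \int_0^1 \phi_\alpha(m)\, D_{I_m}(-\alpha)\,d\alpha, \qquad D_I(\beta) := \sum_{\ell\in I}\e(\beta\ell).
\]
Taking absolute values, summing over $m\in[1-K,N-1]$, and swapping the finite sum with the integral yield
\[
K\left|\sum_{n=1}^N x_n\right| \le \int_0^1 \Bigl(\max_m |D_{I_m}(\alpha)|\Bigr) \sum_{m=1-K}^{N-1}|\phi_\alpha(m)|\,d\alpha.
\]
I would then invoke the standard Dirichlet kernel estimate $|D_{I_m}(\alpha)|\ll \min(K,\|\alpha\|^{-1})$, with $\|\alpha\|$ the distance from $\alpha$ to the nearest integer, together with $\int_0^1 \min(K,\|\alpha\|^{-1})\,d\alpha \ll \log K$. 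Pulling the supremum of $\sum_m|\phi_\alpha(m)|$ out of the remaining integral (attained by continuity at some $\alpha^{\ast}\in[0,1]$) produces
\[
K\left|\sum_{n=1}^N x_n\right| \ll (\log K)\sum_{m=1-K}^{N-1}|\phi_{\alpha^{\ast}}(m)|.
\]
Since $K\le N$ forces $[1-K,N-1]\subseteq[-N,N]$ and $\log K\le \log N$, this matches the claim.

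The hard part will be mostly careful bookkeeping: verifying that the swap of sums leads to $I_m\subseteq\{1,\ldots,K\}$ so that after Fourier insertion only the complete length-$K$ sum $\phi_\alpha(m)$ appears on the right, and that the resulting range $m\in[1-K,N-1]$ sits inside $[-N,N]$. The Dirichlet-kernel step and the pigeonhole on $\alpha$ are routine, so I do not anticipate any substantive obstacle beyond this bookkeeping.
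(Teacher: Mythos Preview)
Your argument is correct, but the paper's proof takes a somewhat different technical route. Following Friedlander--Iwaniec, the paper introduces the trapezoidal weight $f(x)=\min\{x,1,N+1-x\}$ on $[0,N+1]$ (equal to $1$ on the integers $1,\ldots,N$, but continuous on $\R$), writes $\sum_{n=1}^N x_n=\sum_{n\in\Z}f(n)x_n$, averages over shifts by $k=1,\ldots,K$, and then applies Fourier inversion $f(k+n)=\int_\R g(y)\,\e((k+n)y)\,dy$ with $g=\widehat f$; the logarithm arises from the cited bound $\int_\R|g(y)|\,dy\ll\log N$. Your approach stays entirely discrete: you keep the sharp indicator of the truncated range $I_m$, detect it via $\int_0^1 D_{I_m}(-\alpha)\e(\alpha k)\,d\alpha$, and obtain the logarithm from the standard $L^1$ bound on the Dirichlet kernel, $\int_0^1\min(K,\|\alpha\|^{-1})\,d\alpha\ll 1+\log K$. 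Both are ``completing the sum'' arguments that trade a shift-average for a twist by $\e(\alpha k)$ at logarithmic cost; your version is more self-contained (no appeal to the Friedlander--Iwaniec computation) and in fact gives $\log K$ rather than $\log N$, while the paper's smoothing-function approach has the advantage of plugging directly into the Poisson framework and adapting easily when one wants smooth weights.
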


\begin{proof}  
As in~\cite[Section~3]{FrIw},  consider the function 
\[
f(x) = \min\{x, 1, N+1-x\}
\]
in the interval $x \in [0, N+1]$ and set $f(x) = 0$ elsewhere
(that is, the graph of $f(x)$ looks like a trapezoid). 
Since $\Z$ is invariant with respect to shifting by $k\in \Z$,  we see that 
\[
\sum_{n=1}^N  x_n = \sum_{n\in \Z}  f(n) x_n  = \sum_{n\in \Z}  f(k+n) x_{k+n} .
\]
Hence, summing over $k=1, \ldots, K$ and changing the order of summation, yields
\[
\sum_{n=1}^N  x_n = \frac{1}{K}  \sum_{n\in \Z} \sum_{k = 1}^K f(k+n) x_{k+n} 
=  \frac{1}{K}  \sum_{n=-N}^N \sum_{k = 1}^K f(k+n) x_{k+n} .
\]

Let 
\[
g(y) = \int_{-\infty}^\infty f(x) \e(-xy) dx
\]
be the Fourier transform of $f(x)$. 
Applying Poisson summation to  the sum  over  $n$ gives 
\[
\sum_{n=1}^N  x_n  
=  \frac{1}{K}  \int_{-\infty}^\infty g(y) \sum_{n=-N}^N \e(ny) \sum_{k = 1}^K e(ky) x_{k+n} dy. 
\]
Therefore 
\begin{align*}
\sum_{n=1}^N  x_n  
& \ll \frac{1}{K}  \int_{-\infty}^\infty |g(y)|  \sum_{n=-N}^N \left| \sum_{k = 1}^K \e(ky) x_{k+n}\right| dy\\ 
& \ll  \frac{1}{K}  \sum_{n=-N}^N \left| \sum_{k = 1}^K \e(\alpha k) x_{k+n}\right| \int_{-\infty}^\infty |g(y)|  dy  , 
\end{align*}
for some real $\alpha$ for which the maximum is above double sum is achieved. 
Note that this sum is periodic with period 1 and a continuous function of $y$, so such $\alpha$. 
We now recall the bound
\[
\int_{-\infty}^\infty |g(y)| \ll \log N, 
\]
established in~\cite[Section~3]{FrIw}, and conclude the proof.
\end{proof}

\subsection{Multiplicative orders modulo divisors}

There is no doubt that the following elementary statement is well-known, it 
is also given in~\cite[Lemma~6]{OstShp}.  

\begin{lem}\label{lem:MultOrd}
Let $e$ be of multiplicative order $\tau_q$ modulo a positive integer $q$ with $\gcd(e,q)=1$ 
and of multiplicative order $\tau_r$  modulo a positive divisor $r \mid q$.
Then 
\[
\tau_r \ge \frac{r}{q} \tau_q.
\]
\end{lem}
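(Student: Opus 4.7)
The plan is to reduce the claim to a bound on the kernel of the natural reduction homomorphism $\pi\colon (\Z/q\Z)^* \to (\Z/r\Z)^*$. First I would observe that $\pi$ is well-defined on units (since $r \mid q$) and surjective: via the Chinese remainder theorem this factors into the prime-power maps $(\Z/p^a\Z)^* \to (\Z/p^b\Z)^*$ with $a \ge b \ge 1$, each of which is standardly surjective. Consequently $|\ker \pi| = \varphi(q)/\varphi(r)$.

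Next I would restrict $\pi$ to the cyclic subgroup $\langle e \rangle \le (\Z/q\Z)^*$, which has order $\tau_q$. Since $\pi$ sends $e \bmod q$ to $e \bmod r$, the image of the restriction is exactly the cyclic group $\langle e \bmod r \rangle$, of order $\tau_r$ by definition. The first isomorphism theorem then yields a kernel of order $\tau_q/\tau_r$, sitting inside $\ker \pi$, so Lagrange gives
\[
\frac{\tau_q}{\tau_r} \;\le\; \frac{\varphi(q)}{\varphi(r)}.
\]

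The last step is the elementary comparison $\varphi(q)/\varphi(r) \le q/r$, which follows immediately from the formula $\varphi(n)/n = \prod_{p \mid n}(1 - 1/p)$ together with the fact that every prime dividing $r$ also divides $q$ (so the product for $r$ runs over a subset of the primes in the product for $q$, making $\varphi(r)/r \ge \varphi(q)/q$). Chaining the two inequalities produces $\tau_q/\tau_r \le q/r$, which rearranges to the desired bound. I do not anticipate a genuine obstacle here: the whole argument is routine group theory together with one standard identity for Euler's totient, and no delicate estimate is needed. The only point worth stating carefully is that the restriction of $\pi$ to $\langle e \rangle$ really does surject onto $\langle e \bmod r \rangle$, but this is automatic from $\pi$ being a homomorphism mapping the chosen generator to a generator of the target.
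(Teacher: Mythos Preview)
Your argument is correct. The paper does not supply its own proof of this lemma; it simply records the statement as well-known and points to~\cite[Lemma~6]{OstShp}. Your route via the reduction homomorphism $(\Z/q\Z)^*\to(\Z/r\Z)^*$ and the totient comparison $\varphi(q)/\varphi(r)\le q/r$ is a standard and perfectly valid way to obtain $\tau_q/\tau_r\le q/r$, which is the desired inequality. One cosmetic remark: your CRT factorisation phrases the prime-power maps as $(\Z/p^a\Z)^*\to(\Z/p^b\Z)^*$ with $a\ge b\ge 1$, but primes dividing $q$ and not $r$ correspond to $b=0$; the map to the trivial group is of course surjective, so this does not affect the argument.
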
  
 
The following lower bound follows immediately from  Lemma~\ref{lem:MultOrd},
and is also given as~\cite[Lemma~7]{OstShp}. 

\begin{lem}\label{lem:Large GCD}
Let $e$ be of multiplicative order $\tau_q$ modulo a positive integer $q$ with $\gcd(e,q)=1$.
Then for any positive integer $h$, 
\[
 \gcd(e^h-1, q)\le hq/\tau_q. 
\]
\end{lem}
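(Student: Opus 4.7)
The plan is to reduce Lemma~\ref{lem:Large GCD} to the previously established Lemma~\ref{lem:MultOrd} via a short order-comparison argument. Set $d = \gcd(e^h - 1, q)$. Since $d \mid q$ and $\gcd(e,q) = 1$, we also have $\gcd(e,d) = 1$, so $e$ has a well-defined multiplicative order $\tau_d$ modulo $d$.

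Next I would exploit the defining relation: by construction $d \mid e^h - 1$, i.e.\ $e^h \equiv 1 \pmod d$, which forces $\tau_d \mid h$, and in particular $\tau_d \le h$. On the other hand, applying Lemma~\ref{lem:MultOrd} to the divisor $d \mid q$ yields the lower bound
\[
\tau_d \ge \frac{d}{q}\, \tau_q.
\]
Chaining the two inequalities gives $(d/q)\tau_q \le h$, and rearranging produces the desired bound $d \le hq/\tau_q$.

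There is essentially no obstacle: the lemma is a direct consequence of Lemma~\ref{lem:MultOrd} together with the elementary observation that the order of $e$ modulo a divisor of $q$ divides any exponent $h$ for which $e^h$ is trivial modulo that divisor. The only point requiring a moment of care is verifying that $\gcd(e,d) = 1$ so that $\tau_d$ is defined, which is immediate from $d \mid q$ and $\gcd(e,q) = 1$.
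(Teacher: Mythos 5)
Your proof is correct and is exactly the derivation the paper intends: the paper states that the lemma ``follows immediately from Lemma~\ref{lem:MultOrd}'', and your argument (the order of $e$ modulo $d=\gcd(e^h-1,q)$ divides $h$, combined with the lower bound $\tau_d\ge (d/q)\tau_q$ from Lemma~\ref{lem:MultOrd}) is precisely that immediate deduction, spelled out. No issues.
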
  

Next, we apply Lemma~\ref{lem:MultOrd} to derive the following variant of~\cite[Lemma~7]{OstShp}.

\begin{lem}\label{lem:MultOrd and GCD}
Let $e$ be of multiplicative order $\tau_q$ modulo a positive integer $q$ with $\gcd(e,q)=1$ 
For a fixed integer $m\ge 0$ there at most $qK/(r \tau_q) +1$ positive integers $k\le K$, 
with 
\[
\gcd(e^k-e^m, q) = r. 
\]
\end{lem}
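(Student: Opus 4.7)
The plan is to reduce the problem to counting residues in an arithmetic progression, using Lemma~\ref{lem:MultOrd} as the key input. First I would observe that the hypothesis $\gcd(e,q)=1$ together with $r\mid q$ forces $\gcd(e,r)=1$, so $e$ has a well-defined multiplicative order $\tau_r$ modulo $r$, and $e$ is invertible modulo $r$.

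Next, I would note that the condition $\gcd(e^k-e^m,q)=r$ is stronger than what we need: it implies the divisibility $r \mid e^k - e^m$. Multiplying by the inverse of $e^m$ modulo $r$, this is equivalent to $e^{k-m}\equiv 1\pmod{r}$, which in turn is equivalent to $k\equiv m\pmod{\tau_r}$. Thus every $k$ counted by the lemma lies in a single residue class modulo $\tau_r$, and the number of such $k$ in $[1,K]$ is at most $K/\tau_r + 1$.

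Finally, I would apply Lemma~\ref{lem:MultOrd} to the divisor $r\mid q$, which gives $\tau_r \ge r\tau_q/q$, and hence
\[
\frac{K}{\tau_r} + 1 \le \frac{qK}{r\tau_q} + 1,
\]
as required. There is essentially no obstacle here: the estimate is one-directional (we only use the necessary condition $r\mid e^k-e^m$, not the equality $\gcd=r$), so the bound is a fairly immediate consequence of Lemma~\ref{lem:MultOrd}. The only minor point to check is that $e^m$ is indeed invertible modulo~$r$, which is guaranteed by $\gcd(e,q)=1$ and $r\mid q$.
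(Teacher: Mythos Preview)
Your proposal is correct and follows essentially the same approach as the paper's proof: reduce $\gcd(e^k-e^m,q)=r$ to the congruence $k\equiv m\pmod{\tau_r}$, count such $k\le K$ as at most $K/\tau_r+1$, and then invoke Lemma~\ref{lem:MultOrd} to replace $\tau_r$ by $r\tau_q/q$. You have in fact supplied more detail than the paper (the invertibility of $e^m$ modulo $r$, and the remark that only the divisibility $r\mid e^k-e^m$ is used), but the argument is the same.
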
  

\begin{proof} Clearly $k \equiv m \pmod {\tau_r}$, where  $\tau_r$ is the  multiplicative order
of  $e$ modulo   $r$  Hence, there are at most $K/\tau_r  +1$ such positive integers $k\le K$.
Recalling Lemma~\ref{lem:MultOrd}, we derive the desired result. \end{proof}

\subsection{Bounding exponential sums} 

In order to use  Lemma~\ref{lem:K-S} we need to estimate the exponential sums
\[
S_{\va}(N) =  \sum_{n=1}^{N} \ep\(a_1u_{n}+\cdots + a_su_{n+s-1}\), 
\]
with $\va = (a_1, \ldots, a_s) \in \F_p^s$, 
for which we prove that 
\begin{equation}\label{eq: Bound SaN}
|S_{\va}(N)| \ll  \(\sqrt{NT/\tau}  p^{7/46}  + N^{1/2} p^{1/2 -  0.5  \rho_{2s}}\)  p^{o(1)}
\end{equation} 
provided that $\va$ is a non-zero vector. 

We see from~\eqref{eq:Pow}  that 
 \[
 u_n \equiv \vartheta^{e^n} \pmod p. 
 \]
 Since $\gcd(e,T) = 1$,  this allows us to extend the definition of $ u_n$ 
 for all $n \in \Z$.
 
By Lemma~\ref{lem:Poisson},  for some real $\alpha$ we have   
 \[
 S_{\va}(N) \ll \frac{\log p}{K}  \sum_{n=-N}^{N} \left|
 \sum_{k=1}^K \ep\(a_1u_{n+k}+\cdots + a_su_{n+k+s-1}\) \e\(\alpha k\)\right|.
 \]

In fact we simply choose $K = N$ and by  the Cauchy inequality, we have 
\begin{align*}
 S_{\va}(N)^2 & \ll \frac{(\log p)^2}{N}  \sum_{n=-N}^{N}\left|
 \sum_{k=1}^N \ep\(a_1u_{n+k}+\cdots + a_su_{n+k+s-1}\) \e\(\alpha k\)\right|^2\\
& = \frac{(\log p)^2}{N}  \sum_{n=-N}^{N}\left|
 \sum_{k=1}^N 
\ep\(\sum_{j=1}^{s}a_j \vartheta^{e^{n+k+j}} \) \e\(\alpha k\) \right|^2. 
\end{align*}
Since $N \le \tau$, all elements $ \vartheta^{e^n}$, $n =1, \ldots, N$, are distinct elements of $\F_p$.
Hence in the sequence  $ \vartheta^{e^n}$, $n =0, \pm 1, \ldots, \pm N$, each  
element of $\F_p$ occurs at most $3$ times. 
Therefore 
\[
 S_{\va}(N)^2  \ll  \frac{(\log p)^2}{N}  \sum_{x \in \F_p^*} \left|
 \sum_{k=1}^N 
\ep\(\sum_{j=1}^{s}a_j x^{e^{k+j}}\)  \e\(\alpha k\) \right|^2
 \]
and after squaring out and opening up, we arrive to 
 \begin{equation}\label{eq: sum S}
 S_{\va}(N)^2  \ll  \frac{ (\log p)^2}{N}    \sum_{k,m =1}^N  \left|  \sum_{x \in \F_p^*}
\ep\(\sum_{j=1}^{s}a_j \(x^{e^{k+j}}-x^{e^{m+j}}\) \) \right|. 
\end{equation}

 In order to apply Theorem~\ref{thm:anyrbetternew} we need to control the 
 greatest common divisors for the differences   of the  exponents 
 of the polynomials in the inner sum in~\eqref{eq: sum S}.

 First we observe  that $\tau$ is the multiplicative order of $e$ modulo $T$. 
 We can also assume that 
 \[
 \frac{s T}{\tau} \le  p^{16/23}
 \]
 as otherwise the result is trivial. 
 Hence, for $j > i$, by Lemma~\ref{lem:Large GCD}
\begin{align*}
\gcd\(e^{k+j}- e^{k+i}, p-1\) & = \gcd\(e^{j-i}-1, T\) \\
&  \le \frac{s T}{\tau} \le  p^{16/23}. 
\end{align*}

Next, we fix a divisor $R \mid T$ and observe that by Lemma~\ref{lem:MultOrd and GCD}
the number of pairs $(k,m)$, $1\le k,m  \le N$, with 
 \[
 \max_{i,j =1, \ldots, s} \gcd\(e^{k+i}- e^{m+j}, p-1\)  =  R
\]
is bounded by $O\(N \(NT/(R \tau) +1\)\)$. 
Hence, using  this for all divisors  $R \mid T$ with 
$R >  p^{16/23}$ using the classical 
bound on the divisor function (see, for example,~\cite[Equation~(1.81)]{IwKow}), we see that the
number $U$ of  pairs $(k,m)$, $1\le k,m  \le K$, with 
 \[
 \max_{i,j =1, \ldots, s} \gcd\(e^{k+i}- e^{m+j}, p-1\)   >  p^{16/23}
\]
can be  bounded as 
\[
U  \ll N   \(\frac{NT}{ p^{16/23} \tau} +1\) p^{o(1)}.
\]
For such pairs we estimate the inner sum over $x \in \F_p$ in~\eqref{eq: sum S} trivially as $p$ 
and, since  $\va$ is a non-zero vector, we can use the bound of Theorem~\ref{thm:anyrbetternew} 
for the remaining pairs. 

Thus, we derive
\begin{align*}
 S_{\va}(N)^2 &  \ll  \frac{ (\log p)^2}{N}   \(U p + N^2 p^{1 -  \rho_{2s}}\)\\
& \le \( \frac{NT p^{7/23}}{\tau} +p + N p^{1 -  \rho_{2s}}\)  p^{o(1)}.
\end{align*}
Clearly the above bound is trivial if  $p^{1 -  \rho_{2s}} \ge N$, while otherwise 
\[
p \le  N p^{1 -  \rho_{2s}}.
\]
Hence the above bound can be simplified as 
\[
|S_{\va}(N)|^2  \le \( \frac{NT p^{7/23}}{\tau}  + N p^{1 -  \rho_{2s}}\)  p^{o(1)},
\]
and yields~\eqref{eq: Bound SaN}. 

In turn, the bound~\eqref{eq: Bound SaN}, together with Lemma~\ref{lem:K-S}  taken with $A = (p-1)/2$,
yields the bound
 \begin{equation}\label{eq: prelim D}
D_{e, \vartheta,s}(p,N) \le  N^{-1/2}  \(\sqrt{T/\tau}  p^{7/46}  + p^{1/2 -  0.5  \rho_{2s}}\)   p^{o(1)}.
\end{equation}

Next we observe that the above bound is trivial for $N < p^{1- \rho_{2s}}$. Hence we can assume that $\tau \ge N \ge p^{1- \rho_{2s}}$. In this case 
\[
\sqrt{T/\tau}  p^{7/46}  \le \sqrt{p/\tau}  p^{7/46} \le p^{7/46 + 0.5   \rho_{2s}}
\le  p^{1/2 -  0.5  \rho_{2s}}
\]
since $ \rho_{2s} \le  \rho_{2} = 3/92$. 

Thus, the first term in~\eqref{eq: prelim D} never dominates, and the desired result follows.

\section{Multidimensional Generalisations}

As in~\cite{OstShp} one can consider various generalisations of the power generator which arise 
from  the dynamical system generated by iterations of multivariable polynomials. 

Let $F_1, \ldots, F_m \in \mathbb{F}_p[x_1, \ldots, x_m]$ be $m$ polynomials in $m$ variables over $\mathbb{F}_p$. For each $i = 1, \ldots, m$ we define the $k$-th iteration of the polynomial $F_i$ by the recurrence relation
\[
F_i^{(0)} = X_i, \quad F_i^{(k)} = F_i \left( F_1^{(k-1)}, \ldots, F_m^{(k-1)} \right), \quad k = 1, 2, \ldots.
\]
More precisely, we define the vectors $u_n = (u_{n,1}, \ldots, u_{n,m}) \in \mathbb{F}_p^m$ by the recurrence relation
\[
u_{n+1,i} = F_i(u_{n,1}, \ldots, u_{n,m}), \quad n = 0, 1, \ldots, \quad i = 1, \ldots, m,
\]
with some initial vector $\vec = (u_{0,1}, \ldots, u_{0,m}) \in \mathbb{F}_p^m$. The sequence $(\vec{u}_n)$ is eventually periodic with some period $\tau \leq p^m$.

Then, we denote $D_{s,\vec{F},\vec{u}}(p,N)$ 
the discrepancy of the $N$ points 
\[
\(\frac{u_{n,1}}{p},\ldots, \frac{u_{n,m}}{p} \ldots, \frac{u_{n+s-1,1}}{p},\ldots,\frac{u_{n+s-1,m}}{p}\),\quad n =1, \ldots, N, 
\]
inside $[0,1]^{ms}$.

Following~\cite{OstShp}, we consider the following systems of polynomials.
\subsubsection*{Type~1} As in~\cite[Section~3.1]{OstShp}, we consider polynomials 
\begin{align*} 
&F_1 = (X_1 - h_1)^{e_1} G_1 + h_1, \\
&\vdots   \qquad \qquad \qquad \quad \vdots\\
&F_{m-1} = (X_{m-1} - h_{m-1})^{e_{m-1}} G_{m-1} + h_{m-1}, \\ 
&F_m = g_m (X_m - h_m)^{e_m} + h_m,
\end{align*}
where $G_i \in \mathbb{F}_p[X_{i+1}, \ldots, X_m]$ for $i = 1, \ldots, m-1$, and $g_m, h_i \in \mathbb{F}_p$ for $i = 1, \ldots, m$, such that none of the $G_i$ has zeroes over $\mathbb{F}_p$.
\subsubsection*{Type~2} The following generalisation of the power generator has been suggested in~\cite[Section~4]{OstShp}. Let 
$F_1 = X_1^e L_1 + L_0$, where $L_0, L_1$ and the remaining 
polynomials $F_2, \ldots, F_m$ are all non-constant linear polynomials in $\F_p[X_2, \ldots, X_m]$.

\subsubsection*{Type~3}
For some integral    triangular 
matrix $ \left( e_{i,j} \right)_{i,j=1}^m \in \mathbb{Z}^{m \times m}$ of exponents, we consider the system of monomials  
\[
F_j= X_1^{e_{j,1}} \cdots X_m^{e_{j,m}}, \qquad j = 1, \ldots, m. 
\]
which provides the most natural generalisation of the power generator but also inherits its negative aspects such as homogeneity, 
see~\cite[Sections~1.2 and~4]{OstShp}.

The argument of~\cite{OstShp} allows to get an explicit estimate the discrepancy of the corresponding sequences of vectors only for $s=1$, 
see, for example,~\cite[Theorem~3.2]{FrSh} or~\cite[Corollary~9]{OstShp}, and appeal to the result of  Bourgain~\cite[Theorem~1]{Bourg1} (that is, to~\eqref{eq:Bourg}), 
otherwise. 

One can use Theorem~\ref{thm:anyrbetternew} and argue as in~\cite{OstShp} to get explicit estimates for the discrepancy $D_{s,\vec{F},\vec{u}}(p,N)$
for any fixed $s \ge 1$, 
via  the Koksma--Sz\"usz  inequality given in Lemma~\ref{lem:K-S}.

It is certainly natural to try to extend the above 
constructions to other systems of polynomials involving more monomials and then use
Theorem~\ref{thm:anyrbetternew}.  Unfortunately 
the sparsity of iterations of such polynomials grows exponentially with the number of 
iterations. So we pose it as an open question to find other polynomial systems 
to which our new bound of exponential sums applies.

\section*{Acknowledgements}

The authors would like to thank  Arne Winterhof for very helpful comments and 
suggestions.

During the preparation of this work, the  authors   were partially supported by the
Australian Research Council Grant DP230100530.

\end{document}